\documentclass[11pt]{amsart}

\usepackage{amsmath,amssymb,amsthm,tikz}
\usetikzlibrary{calc,decorations.markings,matrix,arrows}

\newcommand{\C}{\mathbb{C}}

\newcommand{\cK}{\mathcal{K}}
\newcommand{\cO}{\mathcal{O}}

\newcommand{\Gr}{\mathrm{Gr}}
\newcommand{\Hom}{\mathrm{Hom}}

\newcommand{\Irr}{\mathrm{Irr}}
\newcommand{\SL}{\mathrm{SL}}

\renewcommand{\top}{\mathrm{top}}
\newcommand{\dom}{\mathrm{dom}}

\newcommand{\vG}{G^\vee}
\newcommand{\vlambda}{{\vec{\lambda}}}
\newcommand{\vgamma}{{\vec{\gamma}}}
\newcommand{\vmu}{{\vec{\mu}}}
\newcommand{\vnu}{{\vec{\nu}}}

\newcommand{\braket}[1]{{\langle #1 \rangle}}

\newcommand{\eq}[2]{\begin{equation}\label{#1}#2\end{equation}}

\newcommand{\bv}{\mathbf{v}}
\newcommand{\bw}{\mathbf{w}}
\newcommand{\cM}{\mathcal{M}}
\newcommand{\fg}{\mathfrak{g}}

\newtheorem{theorem}{Theorem}[section]
\newtheorem{proposition}[theorem]{Proposition}
\newtheorem{lemma}[theorem]{Lemma}

\newtheorem{corollary}[theorem]{Corollary}

\tikzset{midto/.style={postaction={decorate,
    decoration={markings,mark=at position .5 with
    {\draw (-.035,-.07) -- (.035,0) -- (-.035,.07);}}}}}
\tikzset{midfrom/.style={postaction={decorate,
    decoration={markings,mark=at position .5 with
    {\draw (.035,-.07) -- (-.035,0) -- (.035,.07);}}}}}
\tikzset{web/.style={darkblue,semithick}}

\begin{document}

\title{Cyclic sieving, rotation, and geometric representation theory}
\author{Bruce Fontaine}
\address{Mathematical Sciences Research Institute}
\email{bfontain@gmail.com}
\author{Joel Kamnitzer}
\address{University of Toronto}
\email{jkamnitz@math.toronto.edu}

\begin{abstract}
We study rotation of invariant vectors in tensor products of minuscule representations.  We define a combinatorial notion of rotation of minuscule Littelmann paths.  Using affine Grassmannians, we show that this rotation action is realized geometrically as rotation of components of the Satake fibre.  As a consequence, we have a basis for invariant spaces which is permuted by rotation (up to global sign).  Finally, we diagonalize the rotation operator by showing that its eigenspaces are given by intersection homology of quiver varieties.  As a consequence, we generalize Rhoades' work on the cyclic sieving phenomenon.
\end{abstract}
\maketitle

\section{Introduction}
\subsection{Cyclic Sieving Phenomenon}
Let $ X $ be a finite set and let $ c \in S_X $ be a permutation of $ X $ of order $ r $.  We can study the sizes of the fixed point set $ X^{c^d}$ for $ d \ge 0 $.  Let $ f(q) $ be a polynomial.  Following Reiner-Stanton-White, we say that $ (X, c, f(q)) $ exhibits the \textit{cyclic sieving phenomenon} if $ f(\zeta^d) = | X^{c^d} | $ for all $ d \ge 0 $, where $ \zeta $ is a fixed primitive $ r$-th root of unity.

In the main theorem of \cite{R}, Rhoades proved that the set of semistandard Young tableaux of fixed rectangular shape and fixed content exhibits the cyclic sieving phenomenon, where the permutation is given by some power of the promotion bijection.  He proved the cyclic sieving polynomial is the Kostka-Foulkes polynomial (up to a power of $ q$). The goal of the present paper is to generalize Rhoades' result, simplify his proof, and situate the result in the context of geometric representation theory.

Though the definition of the cyclic sieving phenomenon only involves finite sets, in order to find a nice expression for $ f(q) $ it is convenient to replace $ X $ by the vector space $ \C X $ spanned by $ X $.  The permutation $ c $ extends to a diagonalizable linear operator $ c : \C X \rightarrow \C X $.  Note that the size of the fixed point sets are given by the traces of the powers of $ c^d $ acting on $ \C X $.  Thus if we can diagonalize $ c $ on $ \C X $, then we can find the cyclic sieving polynomial.

When $ X $ is the above set of tableaux, the space $\C X $ can be identified with the space of invariants in a tensor product of representations of $ SL_n $.  Under this identification $ c $ becomes, up to sign, the linear operator of rotation of tensor factors.  Thus we are lead to studying the linear operator of rotation on the vector space of invariant vectors in a tensor product.

\subsection{Minuscule Littelmann paths and their rotations}
Fix a complex semisimple group $ G $.  Recall that a dominant weight $ \lambda $ is called minuscule if all weights of $ V(\lambda) $ lie in the Weyl group orbit of $ \lambda $.  A \textit{minuscule sequence} is a sequence $ \vlambda = (\lambda_1, \dots, \lambda_m) $ of minuscule weights.  The \textit{rotation} $ \vlambda^{(j)} $ of a minuscule sequence is defined be the effect of rotating $ \vlambda $ by $ j $ steps, so that $ \vlambda^{(j)}_k = \vlambda_{j + k} $ (where we regard the indices as lying in $ \mathbb{Z}/m $).

In the paper, we will be interested in the tensor product representation
$$ V(\vlambda) := V(\lambda_1) \otimes \cdots \otimes V(\lambda_m)$$
and in particular its space of invariants $ V(\vlambda)^G $.  We will assume that $ |\vlambda | = \sum_k \lambda_k $ lies in the root lattice, so that $ V(\vlambda)^G $ will be non-empty.

A \textit{minuscule Littelmann path} of type $ \vlambda $ is a sequence $$ \vmu = (\mu_0 = 0, \mu_1, \dots, \mu_m= 0) $$ of dominant weights of $ G $ such that $ \mu_k - \mu_{k-1} = \lambda_k $ for all $ k $.  We let $ P(\vlambda) $ denote the set of minuscule Littelmann paths of type $ \vlambda $.  It is an easy observation that the size of $ P(\vlambda) $ equals the dimension of $ V(\vlambda)^G $.

The rotation of tensor factors gives a linear map $ R : V(\vlambda)^G \rightarrow V(\vlambda^{(1)})^G $.  In this paper, we define a combinatorial version of this map: a bijection $ R: P(\vlambda) \rightarrow P(\vlambda^{(1)}) $ we call \textit{rotation}.  When $ G = SL_n $, minuscule Littelmann paths of type $ \vlambda $ are naturally in bijection with semistandard Young tableaux of fixed rectangular shape and content given by $ \vlambda$.  We prove that under this correspondence, rotation of minuscule Littelmann paths corresponds to promotion of tableaux.

Let $ \ell $ be a positive integer such that $ \vlambda^{(\ell)} = \vlambda $.  We let $ r = m/\ell $ and fix $\zeta$, a primitive $ r$-th root of unity.  We will study cyclic sieving for the action of $ R^\ell $ on $ P(\vlambda)$.

\subsection{Affine Grassmannian perspective}
The vector space of invariants $ V(\vlambda)^G $ has two incarnations in geometric representation theory, both of which will be important in this paper.

The geometric Satake correspondence of Lusztig \cite{Lusztig:qanalog}, Ginzburg \cite{Ginzburg:loop}, and Mirkovi\'c-Vilonen \cite{MV:geometric} gives an interpretation of the representation theory of $ G $ in terms of the geometry of $ \Gr $, the affine Grassmannian of the Langlands dual group $ \vG $.

Starting with our minuscule sequence $ \vlambda$, we can consider the Satake fibre $ F(\vlambda) $, which is the variety of $ m$-gons with side lengths $ \vlambda $ in $ \Gr $.  By the geometric Satake correspondence, we obtain a canonical identification
\begin{equation} \label{eq:geomsat}
H_{\top}(F(\vlambda)) = V(\vlambda)^G
\end{equation}
The left hand side has a canonical basis given by the classes of irreducible components.  The resulting basis of $ V(\vlambda)^G $ is called the Satake basis.  In \cite{us}, we constructed a bijection $ \vmu \mapsto Z_\vmu $ between $ P(\vlambda)$ and the irreducible components of $ F(\vlambda)$.  Our first main result examines the relationship between rotation of $ V(\vlambda)^G $ and rotation of minuscule Littelmann paths.

\begin{theorem} \label{th:intro1}
Up to a global sign, rotation of tensor factors takes the Satake basis of $V(\vlambda)^G $ to the Satake basis of $ V(\vlambda^{(1)})^G $, according to the rotation $ R $ of minuscule Littelmann paths.
\end{theorem}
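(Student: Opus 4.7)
The strategy has three steps: construct a geometric rotation on Satake fibres, verify under geometric Satake that it implements the algebraic rotation of tensor factors up to a global sign, and check that on irreducible components it matches the combinatorial rotation $R$.

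First I would construct a geometric rotation map $\rho \colon F(\vlambda) \longrightarrow F(\vlambda^{(1)})$. A point of $F(\vlambda)$ is a polygon $(L_0 = L_m, L_1, \dots, L_{m-1})$ in $\Gr$ with $L_0$ equal to the base point and with $(L_{k-1}, L_k)$ of relative position $\lambda_k$. Cyclically shifting the indices produces $(L_1, L_2, \dots, L_m, L_1)$, which is an $m$-gon of type $\vlambda^{(1)}$ but based at $L_1 \in \Gr^{\lambda_1}$. Since $\lambda_1$ is minuscule, $\Gr^{\lambda_1}$ is a single $G^\vee[[t]]$-orbit, so one may choose $g \in G^\vee[[t]]$ taking $L_1$ to the base point; applying $g$ simultaneously to all the lattices lands the shifted polygon in $F(\vlambda^{(1)})$. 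The element $g$ is unique only up to the stabilizer of the base point, so I would argue, using $G^\vee[[t]]$-equivariance, that $\rho$ is well-defined on irreducible components and hence induces a canonical map $\rho_* \colon H_\top(F(\vlambda)) \to H_\top(F(\vlambda^{(1)}))$.

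Second, I would use compatibility of geometric Satake with the convolution product, together with its commutativity constraint, to show that $\rho_*$ realizes rotation of tensor factors on $V(\vlambda)^G$ under the identification (\ref{eq:geomsat}), up to a global sign. The sign arises because the commutativity constraint on the Satake category for minuscule IC sheaves carries a sign determined by the parities of the $\dim \Gr^{\lambda_i}$; cyclic rotation by one step is a product of $m-1$ adjacent transpositions whose signs assemble into a single constant independent of the invariant vector.

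Third, I would identify $\rho(Z_\vmu)$ with $Z_{R(\vmu)}$. The construction of $Z_\vmu$ in \cite{us} is inductive along the path $\vmu$, building the polygon vertex by vertex within strata indexed by the $\mu_k$. Since the combinatorial rotation $R$ is defined precisely to track a cyclic shift of such a polygon followed by the translation that restores the base point, comparing the two inductive constructions yields the desired matching. The main obstacle is the first step: basing polygons at the origin breaks the cyclic symmetry, so the geometric rotation is not canonical as a map of varieties. The technical heart of the argument is to show that the ambiguity in the choice of translation $g$ disappears on top Borel--Moore homology; once this is done, Step 2 is standard Satake, and Step 3 is a bookkeeping exercise with the definitions of \cite{us}.
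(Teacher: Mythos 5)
Your overall architecture coincides with the paper's: a geometric cyclic shift on polygons (translate the new base vertex $L_1$ back to $t^0$), well-defined on components because the ambiguity lies in a connected stabilizer; a comparison of this $R^{geom}$ with rotation of tensor factors up to the sign $(-1)^{\langle \lambda_1, 2\rho^\vee\rangle}$ (the paper simply quotes this from Theorem 4.5 of \cite{us}, and your sketch via the commutativity constraint is the right provenance); and finally a matching of components. One small slip in Step 1: there is no $g \in \vG(\cO)$ taking $L_1 \in \Gr(\lambda_1)$ to the base point $t^0$, since these lie in different $\vG(\cO)$-orbits; you need $g = g_1^{-1} \in \vG(\cK)$ where $L_1 = [g_1]$, with the ambiguity now being right multiplication by $\vG(\cO)$, handled by the $\vG(\cO)$-invariance of the components.

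The genuine gap is in Step 3, which you dismiss as ``a bookkeeping exercise.'' The cyclic shift of a point of $Z_\vmu$ lands in the stratum of $F(\vlambda^{(1)})$ recorded by the distances $d(L_1, L_i)$, and these are \emph{not} constant on $Z_\vmu$: only on a suitable dense open subset do they equal $R(\vmu)_{i-1}$. So the statement $R^{geom}([\overline{Z_\vmu}]) = [\overline{Z_{R(\vmu)}}]$ requires proving that
$$Y_\vmu = \{(L_0,\dots,L_m)\in Z_\vmu \mid d(L_1,L_i)=R(\vmu)_{i-1}\}$$
is dense in $Z_\vmu$. This is where the paper does its real work: it rewrites $Y_\vmu$ as a twisted product whose $i$-th factor is $T(R(\vmu)_{i-2},\lambda_i,R(\vmu)_{i-1})\cap T(\mu_{i-1},\lambda_i,\mu_i)$, and proves a lemma (via the Bruhat decomposition of $M_+(\mu)t^{a\lambda}$ into $B_+$-orbits) showing each such intersection is dense in $T(\mu_{i-1},\lambda_i,\mu_i)$; the hypotheses of that lemma are supplied exactly by the combinatorial construction of $R(\vmu)$ through Lemma \ref{lem:weights}, which guarantees $w_i \in W_{R(\vmu)_{i-2}}$. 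Without this density argument your claim that the two ``inductive constructions'' match is an assertion, not a proof; by contrast, the well-definedness of the translation on top homology, which you identify as the technical heart, is the easy part.
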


A similar result for the special case $ G = SL_n$ is proved by Rhoades \cite{R} with the Satake basis replaced by the dual canonical basis.  On the other hand, when $ G = SL_2,SL_3 $ a similar result is proved by Petersen-Pylyavskyy-Rhoades \cite{PPR} with the Satake basis replaced by the web basis.

\subsection{Quiver variety perspective}
In this section, we specialize to the case where $ G $ is simply-laced.  By the work of Nakajima, the geometry of quiver varieties can also be used to study the representation theory of $ G $.  In particular, we will study a certain singular affine quiver variety $ \cM_0(\bv, \bw) $, where $ \bw, \bv $ are computed using $\vlambda $.

From Nakajima \cite{Nak99}, we see that there is an isomorphism between the stalk of the IC sheaf of $ \cM_0(\bv, \bw) $ at $ 0 $ and our invariant space $ V(\vlambda)^G $.  Our second main result refines this idea.

\begin{theorem} \label{th:intro2}
There exists an isomorphism $$ \bigoplus_k H^{2k}(IC_0(\cM_0(\bv, \bw))) \cong V(\vlambda)^G $$ which intertwines multiplication by $ \zeta^k $ on $ H^{2k}(IC_0(\cM_0(\bv, \bw))) $ with the rotation $ R^\ell $.
\end{theorem}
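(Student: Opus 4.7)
The plan is to match two natural $\C^*$-weight decompositions of $V(\vlambda)^G$: the cohomological grading on $H^*(IC_0(\cM_0(\bv,\bw)))$, produced by a contracting torus action on the quiver side, and the $R^\ell$-eigenspace decomposition, produced by a loop-rotation torus action on the Beilinson-Drinfeld globalization of $F(\vlambda)$.

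On the quiver side, I would start from the ungraded isomorphism $\bigoplus_k H^{2k}(IC_0(\cM_0(\bv,\bw))) \cong V(\vlambda)^G$ supplied by Nakajima. The IC stalk is pure, so its cohomological grading coincides with the weight filtration and is realized by a contracting $\C^*$-action on $\cM_0(\bv,\bw)$; evaluating at $\zeta \in \C^*$ then scales $H^{2k}(IC_0)$ by $\zeta^k$.

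On the affine Grassmannian side, I would replace $F(\vlambda)$ by its Beilinson-Drinfeld globalization, letting the $m$ polygon vertices move on $\P^1$. Placing them initially at the $m$th roots of unity and letting $\C^*$ act by rotation of $\P^1$ produces, via nearby cycles and geometric Satake, a monodromy operator on $V(\vlambda)^G$ realizing rotation of tensor factors (a global form of Theorem~\ref{th:intro1}). Since $\vlambda^{(\ell)} = \vlambda$, this $\C^*$-action descends through the $r$-fold quotient and delivers $R^\ell$ as monodromy at the primitive root $\zeta$; in particular, the $\zeta^k$-eigenspaces of $R^\ell$ assemble into a $\C^*$-weight decomposition of $V(\vlambda)^G$.

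Finally, I would match the two $\C^*$-actions by invoking the geometric comparison between the Satake fibre and the Nakajima quiver variety, together with the component bijection $\vmu \mapsto Z_\vmu$ of \cite{us}. Under this identification, the loop-rotation torus on the Grassmannian side transports to the contracting torus on $\cM_0(\bv,\bw)$, so the two weight decompositions coincide. The main obstacle will be precisely this last step: one must carefully check that the slice/quiver equivalence intertwines two a priori unrelated torus actions, and that the induced grading shifts reproduce the $\zeta^k$-scaling required by the theorem.
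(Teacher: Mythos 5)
Your proposal has the right intuition in one place --- placing the $m$ marked points at the $m$-th roots of unity and extracting the grading from the degeneration to a single point is exactly the mechanism the paper uses --- but the route you take through it has a genuine gap, and it is precisely the step you flag as ``the main obstacle.'' The matching of the two torus actions is not a technical verification one can ``invoke''; it is the entire mathematical content of the theorem, and the tool you cite does not do it. The Mirkovi\'c--Vybornov isomorphism (a) exists only for $G = SL_n$, whereas the theorem is asserted for all simply-laced $G$, and (b) identifies $\cM_0(\bv,\bw)$ with a slice $\Gr(\nu)_\gamma$ in the affine Grassmannian of $PGL_m$ --- the \emph{other} group in skew Howe duality --- not with anything living near the Satake fibre $F(\vlambda) \subset \Gr_{\vG}$. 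There is no known direct geometric comparison between $F(\vlambda)$ and $\cM_0(\bv,\bw)$ along which a loop-rotation torus could be ``transported'' to a contracting torus; the paper explicitly remarks that the relationship between these two geometries is an open instance of symplectic duality, established here only as a corollary of the theorem, not as an input to it. Your Beilinson--Drinfeld/nearby-cycles construction of $R^\ell$ as monodromy is also left as a plausibility claim rather than a proof (in the Satake setting the relevant monodromy questions are delicate), but even granting it, the proof cannot close without the comparison step.

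What the paper does instead is bypass geometry-to-geometry comparison entirely by going through the current algebra $\fg[t]$. By Kodera--Naoi, $H_*(L(\bw))$ with its homological grading is the Weyl module $W(|\vlambda|)$ with its loop grading; by Fourier--Littelmann, $W(|\vlambda|)$ is the fusion product $V(\lambda_1)*\cdots*V(\lambda_m)$, i.e.\ the associated graded of $W = V(\lambda_1)_{c_1}\otimes\cdots\otimes V(\lambda_m)_{c_m}$ by the cyclic filtration, for any distinct evaluation points. Taking $c_i = \eta^i$ with $\eta^m=1$, $\eta^\ell=\zeta$, one computes directly that $R^\ell(Xt^k\,v) = \eta^{k\ell}\,Xt^k\,R^\ell(v)$, so $R^\ell$ preserves the filtration and acts by $\zeta^k$ on the $k$-th graded piece; Nakajima's decomposition theorem for $\pi:\cM(\bw)\to\cM_0(\bw)$ then transfers this to the IC stalks. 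If you want to salvage your approach, replace the geometric transport step by this representation-theoretic bridge: the current algebra action is what actually links the quiver-variety grading to the tensor product with its rotation operator.
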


Let us define the Kazhdan-Lusztig type polynomial of $ \cM_0(\bv, \bw) $ as $$ f_\vlambda(q) = \sum_k \dim H^{2k}IC_0(\cM_0(\bv, \bw)) q^k. $$  From Nakajima's work, we can consider $ f_\vlambda(q) $ as a ``$q$-analog of tensor product multiplicity''.

\subsection{Conclusion}
Combining Theorems \ref{th:intro1} and \ref{th:intro2}, we will deduce our main result.

\begin{theorem} \label{th:intro3}
The triple $ (P(\vlambda), R^\ell, q^{\langle |\vlambda|,\rho^\vee \rangle} f_\vlambda(q))$ exhibits the cyclic sieving phenomenon.
\end{theorem}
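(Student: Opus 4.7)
The plan is to deduce the cyclic sieving identity from Theorems \ref{th:intro1} and \ref{th:intro2} by trading the fixed-point count on $P(\vlambda)$ for a trace of $(R^\ell)^d$ on $V(\vlambda)^G$, which can then be computed via the eigendecomposition supplied by intersection cohomology.

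First, since $R^\ell$ permutes the finite set $P(\vlambda)$, its fixed-point counts coincide with character values of its linear extension:
$$\bigl|P(\vlambda)^{(R^\ell)^d}\bigr|=\mathrm{tr}\bigl((R^\ell)^d\bigm|\C P(\vlambda)\bigr).$$
Transporting along the Satake bijection $\vmu\mapsto Z_\vmu$ identifies $\C P(\vlambda)$ with $V(\vlambda)^G$ under the Satake basis. Theorem \ref{th:intro1} says that rotation of tensor factors acts on this basis as the combinatorial rotation twisted by a global sign; iterating through one full period of length $\ell$ accumulates these signs into a scalar $\epsilon=\prod_{j=0}^{\ell-1}\epsilon(\vlambda^{(j)})\in\{\pm 1\}$, giving
$$\mathrm{tr}\bigl((R^\ell)^d\bigm|V(\vlambda)^G\bigr)=\epsilon^d\cdot\bigl|P(\vlambda)^{(R^\ell)^d}\bigr|.$$

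Second, Theorem \ref{th:intro2} diagonalizes $R^\ell$ on $V(\vlambda)^G$: the $\zeta^k$-eigenspace has dimension $\dim H^{2k}(IC_0(\cM_0(\bv,\bw)))$. Taking the trace of $(R^\ell)^d$ in this eigenbasis,
$$\mathrm{tr}\bigl((R^\ell)^d\bigm|V(\vlambda)^G\bigr)=\sum_k\zeta^{kd}\dim H^{2k}\bigl(IC_0(\cM_0(\bv,\bw))\bigr)=f_\vlambda(\zeta^d).$$
Combining the two expressions yields $\bigl|P(\vlambda)^{(R^\ell)^d}\bigr|=\epsilon^{-d}f_\vlambda(\zeta^d)$, so the cyclic sieving identity reduces to the single scalar equality $\epsilon^{-1}=\zeta^{\langle|\vlambda|,\rho^\vee\rangle}$.

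Verifying this sign equality is the main obstacle. Both sides are $\pm 1$: the left by Theorem \ref{th:intro1}, and the right because $\vlambda^{(\ell)}=\vlambda$ forces $|\vlambda|=r\sum_{k=1}^\ell\lambda_k$, whence $\zeta^{\langle|\vlambda|,\rho^\vee\rangle}=(\zeta^r)^{\langle\sum_k\lambda_k,\rho^\vee\rangle}$ is a square root of unity. The natural geometric origin of $\epsilon$ is the cohomological shift $\langle\lambda_k,2\rho^\vee\rangle$ carried by the minuscule IC sheaf of $\overline{\Gr^{\lambda_k}}$: cycling tensor factors once permutes these shifted generators, producing a Koszul-type sign, and the cumulative product over a full period should telescope to match the prefactor $q^{\langle|\vlambda|,\rho^\vee\rangle}$ of the proposed cyclic sieving polynomial. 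The most delicate step is the parity bookkeeping for the (a priori half-integer) quantities $\langle\lambda_k,\rho^\vee\rangle$ and ensuring that the two recipes for the sign agree in every case.
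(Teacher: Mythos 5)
Your reduction is exactly the paper's argument: the fixed-point count of $(R^\ell)^d$ equals the trace of its linearization on $\C P(\vlambda)$, Theorem~\ref{th:intro1} converts this (up to a sign) into the trace of $(R^\ell)^d$ on $V(\vlambda)^G$ via the Satake basis, and Theorem~\ref{th:intro2} evaluates that trace as $f_\vlambda(\zeta^d)$. The step you leave unfinished, the scalar identity $\epsilon^{-1}=\zeta^{\langle|\vlambda|,\rho^\vee\rangle}$, is however a genuine gap and not a formality: the entire content of the prefactor $q^{\langle|\vlambda|,\rho^\vee\rangle}$ in the cyclic sieving polynomial is this identity, and gesturing at a Koszul-type sign that ``should telescope'' does not establish it.

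The gap is closed by using the precise form of Theorem~\ref{th:intro1} (Theorem~\ref{th:rotcomp}): a single rotation carries the explicit sign $(-1)^{\langle\lambda_1,2\rho^\vee\rangle}$, so composing $\ell$ rotations gives $\epsilon=(-1)^{\langle\lambda_1+\cdots+\lambda_\ell,2\rho^\vee\rangle}=\epsilon^{-1}$. Set $a=\langle\lambda_1+\cdots+\lambda_\ell,2\rho^\vee\rangle$. Since $\vlambda^{(\ell)}=\vlambda$ gives $r(\lambda_1+\cdots+\lambda_\ell)=|\vlambda|$, one has $\langle|\vlambda|,\rho^\vee\rangle=ra/2$, which is an integer because $|\vlambda|$ lies in the root lattice. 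If $a$ is even, both $(-1)^a$ and $\zeta^{ra/2}=\zeta^{r(a/2)}$ equal $1$; if $a$ is odd, then $r$ must be even and $\zeta^{ra/2}=(\zeta^{r/2})^a=(-1)^a$. So the parity bookkeeping you flagged does work out in every case, but only once the per-rotation sign is pinned down as $(-1)^{\langle\lambda_1,2\rho^\vee\rangle}$; your heuristic about the cohomological shift of the minuscule IC sheaf points at the correct quantity, but as written it is an expectation rather than a proof, and your argument is incomplete without it.
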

In the above section $ f_\vlambda(q) $ was only defined for simply-connected $ G$, but in fact the definition can be extended to arbitrary $ G $ using current algebras.

When $ G = SL_n$, the polynomial $ f_\vlambda(q) $ equals the Kostka-Foulkes polynomial.  Thus our theorem reduces to Rhoades' in this case.  In fact, we explicitly compute the power of $ q $ needed to make the Kostka-Foulkes polynomial into the cyclic sieving polynomial, thus answering a question of Rhoades (see Theorem \ref{th:RhoadesRefined}).

The reader may find it curious that we have used two different geometric representation theory tools (affine Grassmannian and quiver varieties) in independent ways.  Indeed, one may interpret our work as proving a non-trivial relationship between these two geometries.  We believe this is an instance of the symplectic duality of Braden-Licata-Proudfoot-Webster \cite{BLPW}.

\subsection{Acknowledgements}
We would like to thank Alexander Braverman, Dennis Gaitsgory, Greg Kuperberg, Anthony Licata, Hiraku Nakajima, and Oded Yacobi for helpful conversations.

\section{Rotation of minuscule Littelmann paths}\label{sec:rotpath}
\subsection{Definition of rotation}

We begin with the following terminology.

We say that $\vgamma = (\gamma_1, \dots, \gamma_m) $ is a \textit{minuscule path} of type $ \vlambda $ if $ \gamma_i - \gamma_{i-1} \in w \lambda_i $ for all $ i $ (where we take $ \gamma_0 = 0 $).  If $\vgamma$ is a minuscule path but is not a minuscule Littelmann path, then we say that $ \vgamma $ is a \textit{non-dominant path}.  If $ \vgamma $ is a non-dominant path, let $\dom(\vgamma)$ be the largest index $i$ such that $\gamma_j$ is dominant for $j<i$.

Let $W$ be the Weyl group of $G$. For a weight $\lambda$ of $G$, let $W_\lambda$ be the stabilizer of $\lambda$ in $W$ and $W\lambda$ be the $W$ orbit of $\lambda$.

Then define the following operation from non-dominant paths to minuscule paths:
$$r(\vgamma)_i = \left\{ \begin{array}{ll} \gamma_i & \mbox{if } i < \dom(\vgamma) \\ \gamma_i +(w\gamma_{\dom(\vgamma)}-\gamma_{\dom(\vgamma)}) & \mbox{otherwise} \end{array} \right. $$ where $w\in W$ is of minimal length such that $w\gamma_{\dom(\vgamma)}$ is dominant. We see that $\dom(r(\vgamma))>\dom(\vgamma)$ and thus there exists a $k$ such that $r^k(\vnu)$ is dominant.  We define $ r^{\mathrm{max}}(\vnu) = r^k(\vnu)$ for this $ k $. 

Let $ \vlambda $ be a minuscule sequence and let $ \vmu \in P(\vlambda) $.  We will now construct $ R(\vmu), $ a minuscule Littelmann path of type $ \vlambda^{(1)}$.

Denote by $\vnu$ the minuscule path $\nu_i=\mu_{i+1}-\mu_1$ of length $m-1$ and type $ (\lambda_2, \dots, \lambda_m)$.  We have $\nu_0=0$ and $\nu_{m-1}=-\mu_1$ and thus it is non-dominant.

Define $R(\vmu)$ by $ R(\vmu)_i = r^{\mathrm{max}}(\vnu)_i $ for $ i < m $ and $ R(\vmu)_m = 0 $.

We claim that $R(\vmu)\in P(\vlambda^{(1)})$. To show this we first need the following lemma:
\begin{lemma}\label{lem:weights} Let $\gamma$, $\beta$ be weights of $G$ with $\beta$ dominant and $\gamma-\beta\in W\lambda$ for some minuscule weight $\lambda$. Then the Weyl group element $w$ of minimal length such that $w\gamma$ is dominant is contained in $W_\beta$.
\end{lemma}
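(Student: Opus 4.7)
The plan is to induct on $\ell(w)$. The base case $\ell(w)=0$ is immediate: here $\gamma$ is already dominant and $w=e$ trivially fixes $\beta$.

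For the inductive step, I would pick a simple root $\alpha_i$ witnessing non-dominance of $\gamma$, i.e.\ with $\langle \gamma, \alpha_i^\vee\rangle < 0$. Writing $\gamma = \beta + v\lambda$ for some $v\in W$, we get
\[
\langle \gamma, \alpha_i^\vee\rangle = \langle \beta, \alpha_i^\vee\rangle + \langle v\lambda, \alpha_i^\vee\rangle.
\]
Dominance of $\beta$ makes the first term $\geq 0$, and minusculeness of $\lambda$ forces the second term into $\{-1,0,1\}$. The only way the sum can be negative is $\langle \beta, \alpha_i^\vee\rangle = 0$ and $\langle v\lambda,\alpha_i^\vee\rangle = -1$. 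In particular $s_i \in W_\beta$, and moreover $s_i\gamma = \beta + s_i v\lambda$ still satisfies the hypothesis of the lemma relative to $\beta$.

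Next I would verify that $ws_i$ is of minimal length making $s_i\gamma$ dominant, and that $\ell(ws_i) = \ell(w)-1$. The length statement follows from the standard fact that $\ell(ws_i) = \ell(w)-1$ iff $w\alpha_i$ is a negative root; and since $w\gamma$ is dominant while $\langle \gamma,\alpha_i^\vee\rangle < 0$, the root $w\alpha_i$ must indeed be negative. Minimality of $ws_i$ for $s_i\gamma$ is immediate: any shorter $w''$ making $s_i\gamma$ dominant would give $w'' s_i$ of length $\leq \ell(w)-1$ making $\gamma$ dominant, contradicting the minimality of $w$. Then induction yields $ws_i \in W_\beta$, and combined with $s_i \in W_\beta$ this gives $w = (ws_i)s_i \in W_\beta$.

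The main conceptual obstacle is reconciling minimality of $w$ with the requirement that $w$ fixes $\beta$: a priori one cannot freely choose which simple reflection to peel off. The key point that makes the induction go through is the numerical coincidence above, which is genuinely the place where both the dominance of $\beta$ and the minuscule hypothesis on $\lambda$ are used, and which forces every simple root detecting non-dominance of $\gamma$ to be orthogonal to $\beta$. Everything else is bookkeeping about reduced expressions.
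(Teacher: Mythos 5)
Your proof is correct and follows essentially the same route as the paper's: induct on $\ell(w)$, use dominance of $\beta$ together with the minuscule bound $\langle \gamma-\beta,\alpha_i^\vee\rangle\in\{-1,0,1\}$ to force $\langle\beta,\alpha_i^\vee\rangle=0$ at any simple root witnessing non-dominance, and peel off $s_i\in W_\beta$. The only cosmetic difference is how the length drop is justified (you use the $w\alpha_i<0$ criterion and a direct minimality argument, the paper counts inversions of $\gamma$), which is immaterial.
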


\begin{proof} In general, the length of the minimal $w$ such that $w\gamma$ is dominant is the number positive roots $\alpha$ such that $\braket{\gamma,\alpha^\vee}<0$. If $\gamma$ is dominant, then we are done. Otherwise there exists a simple root $\kappa$ with $\braket{\gamma,\kappa^\vee}<0$. Since $\beta$ is dominant we have $\braket{\beta,\kappa^\vee}\geq 0$ and since $\lambda$ is minuscule $\braket{\gamma-\beta,\kappa^\vee}\in\{-1,0,1\}$. Thus $\braket{\gamma,\kappa^\vee}=-1$ and $\braket{\beta,\kappa^\vee}=0$. This means that $s_\kappa \beta=\beta$ and $s_\kappa\in W_\beta$, so we have $s_\kappa\gamma-\beta\in W\lambda$. Note that for positive roots $\alpha$ we have $\braket{s_\kappa\gamma,\alpha^\vee}=\braket{\gamma,s_\kappa\alpha^\vee}$ and recall that $s_\kappa$ permutes the positive roots that are not $\kappa$ and sends $\kappa$ to $-\kappa$. Thus the minimal $\tilde{w}$ such that $\tilde{w}s_\kappa\gamma$ is dominant satisfies $l(\tilde{w})+1=l(w)$. By induction on $l(w)$, $\tilde{w}\in W_\beta$ and we can take $w=\tilde{w}s_\kappa$.
\end{proof}

\begin{lemma} We have $R(\vmu)\in P(\vlambda^{(1)})$.
\end{lemma}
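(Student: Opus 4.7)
The plan is to verify the four defining properties for $R(\vmu) \in P(\vlambda^{(1)})$: that $R(\vmu)_0 = R(\vmu)_m = 0$, that every $R(\vmu)_k$ is dominant, and that each consecutive difference lies in the correct Weyl orbit for the rotated type $\vlambda^{(1)} = (\lambda_2, \ldots, \lambda_m, \lambda_1)$.

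First I would show that $r$ preserves both the type and the initial vertex $\gamma_0 = 0$ of a minuscule path, and that $\dom(r(\vgamma)) > \dom(\vgamma)$. The initial vertex is untouched because $\dom(\vgamma) \ge 1$. For the type, the only potentially affected difference is at index $d = \dom(\vgamma)$: after $r$ it becomes $w\gamma_d - \gamma_{d-1}$. Applying Lemma~\ref{lem:weights} with $\beta = \gamma_{d-1}$ and $\gamma = \gamma_d$, the minimal Weyl element $w$ taking $\gamma_d$ to dominant lies in $W_{\gamma_{d-1}}$, so this difference equals $w(\gamma_d - \gamma_{d-1})$, still in the Weyl orbit of $\lambda_{d+1}$. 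The other differences are preserved by direct inspection. Iterating, $r^{\mathrm{max}}(\vnu)$ becomes a dominant minuscule path of type $(\lambda_2, \ldots, \lambda_m)$ starting at $0$ and ending at some dominant weight $\eta$. This already secures three of the four required conditions: the boundary values $R(\vmu)_0 = R(\vmu)_m = 0$, dominance of all $R(\vmu)_k$, and membership of the first $m - 1$ consecutive differences in $W\lambda_2, \ldots, W\lambda_m$.

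The subtler remaining condition is that the final difference $-\eta$ lies in $W\lambda_1$, equivalently $\eta \in W(-\lambda_1)$. Since $\mu_1 \in W\lambda_1$ is dominant, uniqueness of the dominant representative in a Weyl orbit forces $\mu_1 = \lambda_1$, hence $\nu_{m-1} = -\lambda_1$. A direct computation gives $r(\vgamma)_{m-1} - \gamma_{m-1} = (w-1)(\gamma_d - \gamma_{d-1})$, which lies in the root lattice $Q$; by induction on the number of $r$-applications we obtain $\eta \equiv -\lambda_1 \pmod Q$. Furthermore $\eta$ can be expressed as $\sum_{i=2}^m u_i \lambda_i$ with $u_i \in W$, since the differences of $r^{\mathrm{max}}(\vnu)$ are Weyl conjugates of the original $\lambda_i$.

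The main obstacle is promoting the congruence $\eta \equiv -\lambda_1 \pmod Q$ to the full equality $\eta = -w_0\lambda_1$, the unique dominant weight in the minuscule orbit $W(-\lambda_1)$. I would handle this by a careful inductive analysis of how $r$ moves the endpoint: each step replaces a single minuscule-orbit difference by its $w$-image and leaves the others fixed, so the endpoint travels through a controlled region of the weight lattice. The minuscule hypothesis on $\lambda_1$ is then essential in ruling out any larger dominant weight in $-\lambda_1 + Q$, either through a norm estimate exploiting that each $(w-1)(\gamma_d - \gamma_{d-1})$ is bounded by the minuscule structure, or by a direct verification that the endpoint remains in the orbit $W(-\lambda_1)$ throughout the iteration, so that upon first becoming dominant it is forced to coincide with $-w_0\lambda_1$.
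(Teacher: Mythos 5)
Your treatment of the first three conditions (boundary values, dominance, and the orbit membership of the differences $R(\vmu)_i - R(\vmu)_{i-1}$ for $i<m$) is correct and is exactly the paper's argument: apply Lemma~\ref{lem:weights} with $\beta=\gamma_{d-1}$ to see that the only modified difference is $w(\gamma_d-\gamma_{d-1})$, so $r$ preserves the type. You also correctly isolate the genuinely hard point, namely that the endpoint $\eta=r^{\mathrm{max}}(\vnu)_{m-1}$ lies in $W(-\lambda_1)$.

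However, that hard point is where your proof stops being a proof. The congruence $\eta\equiv-\lambda_1\pmod Q$ does not by itself pin down $\eta$ among dominant weights (a coset of $Q$ generally contains many dominant weights besides the minuscule one), and your two proposed ways of closing the gap --- ``a norm estimate'' or ``a direct verification that the endpoint remains in the orbit $W(-\lambda_1)$'' --- are stated but not carried out. The second is indeed the right route, but it is not automatic: at each step the endpoint moves by $w\gamma_{\dom(\vgamma)}-\gamma_{\dom(\vgamma)}$, and there is no a priori reason this increment sends $\gamma_{m-1}$ to another point of its Weyl orbit. The paper makes this work by propagating an \emph{additional} invariant through the iteration: that $\gamma_{\dom(\vgamma)}-\gamma_{m-1}$ is dominant (true at the start because $\nu_j-\nu_{m-1}=\mu_{j+1}$). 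Given this, a pairing computation with simple coroots (in effect, Lemma~\ref{lem:weights} applied to $\gamma_{\dom(\vgamma)}$ with $\beta=\gamma_{\dom(\vgamma)}-\gamma_{m-1}$, using that $\gamma_{m-1}$ lies in a minuscule orbit) shows $w$ fixes $\gamma_{\dom(\vgamma)}-\gamma_{m-1}$, hence $w\gamma_{\dom(\vgamma)}-\gamma_{\dom(\vgamma)}=w\gamma_{m-1}-\gamma_{m-1}$ and the new endpoint is $w\gamma_{m-1}\in W(-\lambda_1)$; one must also check the auxiliary invariant is preserved. Without identifying and verifying this auxiliary dominance invariant, the key step of the lemma remains unproved.
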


\begin{proof} We need to check that $R(\vmu)_{i}-R(\vmu)_{i-1}\in W\lambda^{(1)}_i=W\lambda_{i+1}$. First, we check the condition when $i<m$. Given a non-dominant minuscule path $\vgamma$, for $i\neq\dom(\vgamma)$ we have $$r(\vgamma)_i-r(\vgamma)_{i-1}=\gamma_{i}-\gamma_{i-1}$$ and for $i=\dom(\vgamma)$ we can apply the previous lemma $$r(\vgamma)_{i}-r(\vgamma)_{i-1}= w\gamma_{i}-\gamma_{i-1}=w(\gamma_{i}-\gamma_{i-1}).$$ Thus $r$ does not change the Weyl orbit in which the successive differences lie. Since $\vnu$ is a non-dominant minuscule path with $\nu_{i}-\nu_{i-1}=\mu_{i+1}-\mu_i\in W\lambda_{i+1}$, by induction we have $R(\vmu)_{i}-R(\vmu)_{i-1}=r^{\mathrm{max}}(\vnu)_i-r^{\mathrm{max}}(\vnu)_{i-1}\in W\lambda_{i+1}$ for $i<m$.

For $i=m$, we will show that if $\vgamma$ is a non dominant minuscule path such that $\gamma_{\dom(\vgamma)}-\gamma_{m-1}$ is dominant and $\gamma_{m-1}\in W(-\lambda_1)$, then $r(\vgamma)$ satisfies the same properties. By supposition $\braket{\gamma_{\dom(\vgamma)}-\gamma_{m-1},\alpha^\vee}\geq 0$ for all simple roots $\alpha$ and since $\gamma_{m-1}\in W(-\lambda_1)$ we have $\braket{\gamma_{m-1},\alpha^\vee}\in\{-1,0,1\}$. By the bilinearity of the pairing, we then have $\braket{\gamma_{\dom(\vgamma)},\alpha^\vee}\geq -1$ and if $\braket{\gamma_{\dom(\vgamma)},\alpha^\vee}=-1$, then $\braket{\gamma_{m-1},\alpha^\vee}=-1$ as well. We conclude that $$w\gamma_{\dom(\vgamma)}-\gamma_{\dom(\vgamma)}=w\gamma_{m-1}-\gamma_{m-1}$$ where $w$ is the minimal word such that $w\gamma_{\dom(\vgamma)}$ is dominant. Then we have $$r(\vgamma)_{m-1}=\gamma_{m-1}-(w\gamma_{\dom(\vgamma)}-\gamma_{\dom(\vgamma)})=w\gamma_{m-1}\in W(-\lambda_1).$$ Since our starting path $\vnu$ has $\nu_{m-1}=-\lambda_1$ and $$\vnu_{\dom(\vnu)}-(-\lambda_1)=\mu_{\dom(\vnu)+1},$$ by induction $r^{\mathrm{max}}(\vnu)_{m-1}\in W(-\lambda_1)$ and so $-R(\vmu)_{m-1}\in W\lambda_1$ as needed.
\end{proof}

Note that at this point it is not clear that this is actually a rotation, i.e. that $R^m(\vmu)=\vmu$, but this is forced by the connection with the underlying geometry shown in the next section.

\subsection{Connection to promotion on tableaux} \label{se:tableaux}
In the case that $G=\SL_n$, this algorithm has a more classical interpretation. There is a one to one map from $P(\vlambda)$ to a certain set of tableaux and the map is equivariant with respect to promotion on tableaux and rotation of minuscule paths.

\begin{lemma} Let $G=\SL_n$ and $\vlambda=(\omega_{i_1},\omega_{i_2},\dots,\omega_{i_m})$, where $\omega_j$ is the $j$-th fundamental weight of $\SL_n$. There is a bijection between rectangular row strict tableaux with $n$ rows and content $(i_1,i_2,\dots,i_m)$ and the elements of $P(\vlambda)$.
\end{lemma}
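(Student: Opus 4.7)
The plan is to construct an explicit bijection that records, for each box of the final rectangular diagram, the step at which that box first appears in the path. First I would fix conventions by lifting to $\GL_n$: a dominant weight of $\SL_n$ corresponds to a Young diagram of height at most $n$, represented as a weakly decreasing $n$-tuple $(a_1 \geq \cdots \geq a_n) \in \Z^n$. In this language, $\omega_{i_k}$ is the vector $(1^{i_k}, 0^{n-i_k})$, and its Weyl orbit is the set of all $0/1$ vectors with exactly $i_k$ ones. The condition $\mu_m = 0$ in $\SL_n$ becomes $\mu_m = (M, \ldots, M)$ for some $M \in \Z_{\geq 0}$, i.e.\ an $n \times M$ rectangle, with $nM = \sum_k i_k$. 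Each transition $\mu_{k-1} \to \mu_k$ is then a choice of subset $S_k \subseteq \{1, \ldots, n\}$ of size $i_k$ (the rows to be incremented); dominance of $\mu_k$ is precisely the condition that this choice yields another Young diagram.

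Given $\vmu \in P(\vlambda)$, I would define a tableau $T$ on the $n \times M$ rectangle by setting $T(r, c) = k$, where $k$ is the unique step at which row $r$ gains its $c$-th box (so $r \in S_k$ and $(\mu_{k-1})_r = c - 1$). The content of $T$ is $(i_1, \ldots, i_m)$ because step $k$ contributes $|S_k| = i_k$ cells labeled $k$. Each row of $T$ is strictly increasing in $k$, since a row can grow at most once per step, so its labels are in strict time order. The columns of $T$ are weakly increasing: if $T(r, c) = k$ and $T(r', c) = k'$ with $r < r'$ but $k' < k$, then just after step $k'$ row $r'$ would have length $\geq c$ while row $r$ has length $< c$, violating dominance of $\mu_{k'}$.

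For the inverse, given a row-strict rectangular $T$ of shape $n \times M$ and content $(i_1, \ldots, i_m)$, I would define $\mu_k$ to be the subshape of cells with labels $\leq k$. Weakly increasing columns ensure that $\mu_k$ is a valid Young diagram, and row-strictness ensures $\mu_k - \mu_{k-1}$ has at most one box per row, so $\mu_k - \mu_{k-1}$ lies in the Weyl orbit of $\omega_{i_k}$. The forward and inverse assignments are visibly mutual inverses.

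The genuine content is the dictionary between weights in $W\omega_{i_k}$ and vertical strips of size $i_k$ on a Young diagram; the rest is routine bookkeeping. The only mild subtlety is that ``row-strict tableau'' must be read with the convention that columns are weakly increasing (the conjugate of the semistandard convention), since this is precisely what is needed for the inverse map to produce a sequence of honest Young diagrams.
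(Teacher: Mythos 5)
Your proposal is correct and is essentially the same bijection as in the paper: the paper defines $\mu_j$ as the shape of the subtableau on entries $1$ through $j$, which is exactly your inverse map, and your forward map (recording the step at which each box appears) is its inverse. You simply supply more detail on the conventions and on why dominance corresponds to weakly increasing columns, which the paper leaves implicit.
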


\begin{proof} Given a tableaux with $n$ rows and content $\vec{i}$, define $\mu_j$ as the shape of the subtableaux supported on entries $1$ to $j$. $\mu_j-\mu_{j-1}$ records the rows in which the entry $j$ appears, and thus $\mu_j-\mu_{j-1}\in W\omega_{i_j}$. This is a bijection between the two sets.
\end{proof}

Recall that the operation of promotion on row strict tableaux works as follows: delete all boxes labelled $1$, then perform jeu-de-taquin. In the $j$-th step, slide the boxes labelled $j+1$ to the left to fill any gaps and then up to fill gaps. Relabel these boxes $j$. At the end, the last column of the tableaux has $i_1$ empty boxes, which get filled with the number $m$. The relationship between promotion on tableaux and rotation on $P(\vlambda)$ is that the choice of reflection at each step in rotation exactly implements the the upwards slides of jeu-de-taquin.

\begin{theorem}
Under this correspondence, rotation on the elements of $P(\vlambda)$ corresponds to promotion of the associated tableaux.
\end{theorem}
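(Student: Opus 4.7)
The plan is to iteratively match the rotation algorithm with the steps of jeu de taquin in promotion by tracking a precise invariant on the intermediate sequences. Given $\vmu \in P(\vlambda)$ corresponding to the tableau $T$, let $T^{(j)}$ denote the intermediate tableau after the $j$-th step of promotion, and let $\sigma_j$ be the partition recording the row lengths of labels $\leq j$ in $T^{(j)}$ (so $\sigma_0 = 0$). The starting sequence $\vnu$ satisfies $\nu_i - \nu_{i-1} = \mu_{i+1} - \mu_i$, the 0-1 vector recording which rows contain label $i+1$ in $T$; thus $\nu_i$ is the row-count vector of cells with labels $\leq i+1$ in $T^{(0)}$, the tableau $T$ with all label-1 cells deleted.

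The key invariant, to be proved by induction on the number $k$ of applications of $r$, is the following. Writing $\vgamma = r^k(\vnu)$ and $j_* := \dom(\vgamma) - 1$ so that $\gamma_0, \dots, \gamma_{j_*}$ are dominant:
\begin{enumerate}
\item $\gamma_i = \sigma_i$ for $0 \leq i \leq j_*$, and
\item $\gamma_i$ equals the row-count vector of cells with labels $\leq i+1$ in $T^{(j_*)}$ for $j_* \leq i \leq m-1$.
\end{enumerate}
These agree at $i = j_*$ because in $T^{(j_*)}$ the original labels $2, \dots, j_*+1$ have been renamed to $1, \dots, j_*$, so the row-count of labels $\leq j_*+1$ in $T^{(j_*)}$ is exactly $\sigma_{j_*}$.

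For the inductive step, at $i_0 = j_*+1$ the minimal Weyl element $w$ such that $w\gamma_{i_0}$ is dominant lies in $W_{\sigma_{j_*}}$ by Lemma \ref{lem:weights}. This Young subgroup permutes rows only within blocks on which $\sigma_{j_*}$ is constant, and these are precisely the pairs of rows between which an up-slide of a label-$(j_*+2)$ cell can occur during step $j_*+1$ of promotion. Since $\gamma_{i_0} - \sigma_{j_*}$ is the 0-1 row-count of label $j_*+2$ in $T^{(j_*)}$ (by the minuscule condition), and $w$ fixes $\sigma_{j_*}$, the identity $w\gamma_{i_0} = \sigma_{j_*+1}$ reduces to the claim that this minimal sorting permutation coincides with the row-permutation realized by the full sequence of up-slides in step $j_*+1$. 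The uniform shift $w\gamma_{i_0} - \gamma_{i_0}$ propagated to later entries then updates each $\gamma_i$ for $i > i_0$ to the row-count vector of labels $\leq i+1$ in $T^{(j_*+1)}$, since labels $j_*+3, \dots, m$ occupy identical positions in $T^{(j_*)}$ and $T^{(j_*+1)}$.

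Iterating $r$ to full dominance yields $r^{\mathrm{max}}(\vnu)_i = \sigma_i$ for all $i < m$, and setting $R(\vmu)_m = 0$ corresponds to filling the remaining holes (which by this point lie in the last column) with label $m$, precisely the final step of promotion. The main technical obstacle is verifying that the abstract minimal $w \in W_{\sigma_{j_*}}$ sorting $\gamma_{i_0}$ agrees with the concrete row-permutation effected by jeu de taquin, including a matching of lengths (number of simple reflections equal to the total number of upward slides). The horizontal left-slides of jeu de taquin preserve row indices and so are invisible to the weight-based rotation algorithm, as is appropriate.
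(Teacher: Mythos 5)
Your proof is correct and follows essentially the same route as the paper: an induction matching each application of $r$ (equivalently, each jeu-de-taquin step of promotion) with the passage from $R(\vmu)_j$ to the dominant representative of $R(\vmu)_j+(\mu_{j+2}-\mu_{j+1})$, with Lemma \ref{lem:weights} guaranteeing the sorting element lies in the stabilizer of the frozen prefix. The one step you flag as the ``main technical obstacle'' --- that the minimal sorting permutation realizes exactly the up-slides of jeu de taquin --- is precisely the step the paper also asserts without further argument, so your write-up is at the same level of detail as (and slightly more explicit about the crux than) the original.
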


\begin{proof}
Let $\vmu\in P(\vlambda)$, $T$ be its corresponding tableaux and $T^p$ its promotion. We show by induction on the tableaux entries that $R(\vmu)$ is the sequence corresponding to $T^p$. At $j=1$, promotion on the tableaux moves the $i_2$ boxes labelled $2$ into the first column of and relabels them $1$. Then $R(\vmu)_1-R(\vmu)_0$ is the dominant weight $\omega_{i_2}$, so the correspondence agrees up to the entry $1$. Suppose that it agrees up to the entry $j$, then the subtableaux with entries $1$ to $j$ corresponds to the subpath $(R(\vmu)_0,\dots,R(\vmu)_j)$. The boxes currently labelled $j+2$ are in rows specified by $\mu_{j+2}-\mu_{j+1}$. After relabeling with $j+1$ and shifting left, the 'tableaux' on entries $1$ to $j+1$ has corresponding path $$(R(\vmu)_0,\dots,R(\vmu)_j,R(\vmu)_j+(\mu_{j+2}-\mu_{j+1})).$$ The operation of shifting the blocks labelled $j+1$ up to fill any empty spaces corresponds to taking the dominant weight in the Weyl orbit of $R(\vmu)_j+(\mu_{j+2}-\mu_{j+1})$ which is $R(\vmu)_{j+1}$ by construction.
\end{proof}

\subsection{Abstract definition using crystals}
The above operation of rotation is natural from the theory of crystals.  Let $B(\lambda)$ be the highest weight crystal of highest weight $\lambda$ and let $B(\vlambda)=B(\lambda_1)\otimes\cdots\otimes B(\lambda_n)$.  Here, for the moment, $\lambda_j $ are arbitrary dominant weights (not necessarily minuscule).

We will need a few basic definitions and results about crystals.  Let $ B $ be a crystal, which is assumed to be a direct sum of the crystals $ B(\lambda) $.  An element $ b \in B $ is called \textit{highest (resp. lowest) weight} if $ e_i b = 0 $ for all $ i \in I $ (resp. $f_i b = 0 $ for all $ i \in I $).

\begin{lemma} \label{th:highlow}
Let $ B, C $ be crystals.  If $ b \otimes c $ is highest weight in $ B \otimes C $, then $ b $ is highest weight in $ B $.  If $ b \otimes c $ is lowest weight in $ B \otimes C $, then $ c $ is lowest weight in $ C $.
\end{lemma}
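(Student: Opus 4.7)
The plan is to reduce everything to a case analysis using Kashiwara's tensor product rule for the crystal operators $e_i$ and $f_i$. Recall that under the convention compatible with the statement, $e_i(b\otimes c)$ equals $e_i b\otimes c$ when $\varphi_i(b)\geq \varepsilon_i(c)$ and equals $b\otimes e_i c$ otherwise; similarly $f_i(b\otimes c)$ equals $f_i b\otimes c$ when $\varphi_i(b)>\varepsilon_i(c)$ and $b\otimes f_i c$ otherwise. Both statements of the lemma will follow by showing that the ``wrong'' branch of this piecewise rule leads to a contradiction with the non-negativity of $\varphi_i$ and $\varepsilon_i$.

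For the first claim, I fix $i\in I$ and assume $e_i(b\otimes c)=0$. The first branch gives $e_i b\otimes c=0$, which forces $e_i b=0$, exactly as desired. The second branch would give $b\otimes e_i c=0$, hence $e_i c=0$, so $\varepsilon_i(c)=0$; but that branch is chosen precisely when $\varphi_i(b)<\varepsilon_i(c)$, which is incompatible with $\varphi_i(b)\geq 0$. Hence the first branch must occur, and $e_i b=0$ for every $i$, i.e. $b$ is highest weight in $B$.

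The second claim is entirely symmetric, using $f_i$ in place of $e_i$. If $f_i(b\otimes c)=0$, then either $f_i c=0$ outright (the desired conclusion), or $f_i b=0$ together with $\varphi_i(b)>\varepsilon_i(c)$, forcing $\varphi_i(b)=0$ and hence the inequality $0>\varepsilon_i(c)\geq 0$, a contradiction. So $f_i c=0$ for all $i$, which is exactly what it means for $c$ to be lowest weight.

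The only real issue is bookkeeping around conventions: different sources swap the roles of the two tensor factors. I would simply pin down the convention at the start of the proof so that the lemma reads correctly with $b$ (respectively $c$) in the conclusion, and then the argument is essentially immediate from the piecewise rule.
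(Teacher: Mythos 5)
Your proof is correct. The paper does not actually prove this lemma (it is stated as a standard fact about crystals), and your case analysis via the tensor product rule is exactly the standard argument one would supply: the ``wrong'' branch forces $\varepsilon_i(c)=0$ with $\varphi_i(b)<\varepsilon_i(c)$ (resp.\ $\varphi_i(b)=0$ with $\varphi_i(b)>\varepsilon_i(c)$), contradicting non-negativity. Two small points worth making explicit: the non-negativity of $\varepsilon_i$ and $\varphi_i$ is precisely where the paper's standing assumption that the crystals are direct sums of the $B(\lambda)$ enters (it fails for abstract crystals), and the convention dependence you flag is genuine --- with the opposite tensor product convention the conclusions for $b$ and $c$ swap, so pinning down the convention of \cite{crystalcommutor} at the outset is not mere bookkeeping.
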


Recall also (see \cite{crystalcommutor}) that we have the Schutzenberger involution $ \xi : B \rightarrow B $.  If $ b $ is lowest (highest) weight, then $ \xi(b) $ will be highest (lowest) weight and lie in the same connected component as $ b $.

Now we consider $ B(\vlambda)^G $ the set of elements in $ B(\vlambda) $ which are both highest and lowest weight (equivalently, the set which are highest weight of weight 0).  We will define $ R : B(\vlambda)^G \rightarrow B(\vlambda^{(1)})^G $ as follows.

Let $ b_1 \otimes b_2 \otimes \cdots \otimes b_m \in B(\vlambda)^G $.  Then by Lemma \ref{th:highlow}, $ b_1 $ is highest weight and $ b_2 \otimes \cdots \otimes b_m $ is lowest weight.  We define
$$
R(b_1 \otimes b_2 \otimes \cdots \otimes b_m) = \xi(b_2 \otimes \cdots \otimes b_m) \otimes \xi(b_1)
$$
It is easy to check that this element actually lives in $ B(\vlambda^{(1)})^G $.

Note that $$ R(b_1 \otimes b_2 \otimes \cdots \otimes b_m) = \sigma_{B(\lambda_1), B(\lambda_2) \otimes \cdots \otimes B(\lambda_m))}(b_1 \otimes (b_2 \otimes \cdots \otimes b_m)), $$
where $ \sigma $ denotes the crystal commutor as defined in \cite{crystalcommutor}.

We also could have defined $ R $ as the composition of the natural bijections
\begin{align*}
B(\vlambda)^G &= \Hom(B(0), B(\lambda_1) \otimes B(\lambda_2) \otimes \cdots \otimes B(\lambda_m)))  \\
&\rightarrow \Hom(B(\lambda_1^*), B(\lambda_2) \otimes \cdots \otimes B(\lambda_m)) \\
&\rightarrow \Hom(B(0), B(\lambda_2) \otimes \cdots \otimes B(\lambda_m) \otimes B(\lambda_1)) = B(\vlambda^{(1)})^G
\end{align*}

Now, let us specialize to the case that all $ \lambda_j $ are minuscule.  Then taking the weight defines a bijection $ B(\lambda_j) \cong W \lambda_j $.  Let us write the inverse of this bijection by $ \nu \mapsto b_\nu $.

It is easy to see the following result, which in particular gives a simple proof of the relationship between minuscule Littelmann paths and tensor product multiplicities.
\begin{proposition}
The map $ \mu \mapsto b_{\mu_1} \otimes b_{\mu_2 - \mu_1} \otimes \cdots \otimes b_{\mu_m - \mu_{m-1}} $ defines a bijection $ P(\vlambda) \rightarrow B(\vlambda)^G $.  Moreover, this bijection exchanges rotation on minuscule Littelmann paths with rotation on $B(\vlambda)^G $.
\end{proposition}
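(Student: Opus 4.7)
The plan is to prove the bijection and the rotation compatibility separately. For the bijection, set $\nu_k = \mu_k - \mu_{k-1} \in W\lambda_k$. Since each $\lambda_k$ is minuscule, $B(\lambda_k)$ is in weight-preserving bijection with $W\lambda_k$, so $\vmu \mapsto b_{\nu_1} \otimes \cdots \otimes b_{\nu_m}$ is injective and yields an element of weight $\mu_m = 0$. The key input is that in a tensor product of minuscule crystals, $b_{\nu_1} \otimes \cdots \otimes b_{\nu_m}$ is highest weight if and only if every partial sum $\mu_j = \nu_1 + \cdots + \nu_j$ is dominant. This follows inductively from the Kashiwara tensor product rule, or equivalently from the Littelmann path model: the concatenated straight-line path stays in the dominant chamber exactly when its corners do, by convexity. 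Since this is precisely the defining condition of $P(\vlambda)$, the image lies in $B(\vlambda)^G$, and surjectivity follows by running the same criterion in reverse.

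For the rotation compatibility, expand
\[ R(b_{\nu_1} \otimes \cdots \otimes b_{\nu_m}) = \xi(b_{\nu_2} \otimes \cdots \otimes b_{\nu_m}) \otimes \xi(b_{\nu_1}). \]
Because $b_{\nu_1} = b_{\lambda_1}$ is the highest weight element of $B(\lambda_1)$, we have $\xi(b_{\lambda_1}) = b_{w_0 \lambda_1}$, the lowest weight element. Its weight $w_0 \lambda_1$ is precisely the last increment $R(\vmu)_m - R(\vmu)_{m-1}$ of the rotated path: the second half of the preceding lemma shows $-R(\vmu)_{m-1} \in W\lambda_1$, and $R(\vmu)_{m-1}$ is dominant, so $R(\vmu)_{m-1} = -w_0\lambda_1$ and $R(\vmu)_m - R(\vmu)_{m-1} = w_0\lambda_1$.

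The crux is to show $\xi(b_{\nu_2} \otimes \cdots \otimes b_{\nu_m})$ equals the tensor encoded by the dominant path $r^{\mathrm{max}}(\vnu)$. I would argue by induction on the number of iterations of $r$. Each step of $r$ applies the minimal-length $w \in W$ making $\gamma_{\dom(\vgamma)}$ dominant and, by Lemma \ref{lem:weights}, fixes the already-dominant prefix; on the crystal side this $w$ is implemented by a sequence of raising operators $e_i$ acting only on the tail of the tensor, lifting the (lowest weight) tensor one stage closer to the highest weight element of its connected component in $B(\lambda_2)\otimes\cdots\otimes B(\lambda_m)$. Iterating yields exactly $\xi$ applied to the lowest weight tensor. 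The main obstacle is verifying this single-step crystal interpretation of $r$ precisely, but the path-reflection algorithm of Section \ref{sec:rotpath} is engineered to match the crystal structure of tensor products of minuscule $B(\lambda_k)$'s, so this identification proceeds directly from the definitions.
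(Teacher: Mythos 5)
The paper offers no proof of this proposition (it is introduced with ``it is easy to see''), so your argument has to stand on its own. Your first paragraph is fine: for minuscule factors the criterion ``highest weight iff every partial sum $\mu_j$ is dominant'' follows from $\varepsilon_i(b_1\otimes\cdots\otimes b_m)=\max_j\bigl(\varepsilon_i(b_j)-\braket{\mu_{j-1},\alpha_i^\vee}\bigr)$, and the bijection follows. Your reduction of the second half is also the right one: $\xi(b_{\lambda_1})=b_{w_0\lambda_1}$ accounts for the last step of $R(\vmu)$, and since $\xi$ of a lowest weight element is the unique highest weight element of its connected component, it suffices to show that each reflection step of $r$ is realized by a crystal raising operator, so that $b(r^{\mathrm{max}}(\vnu))$ lies in the component of $b(\vnu)$.

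That last step, which you flag as ``the main obstacle'' and then dismiss as following directly from the definitions, is where the entire content of the proposition sits, and as stated it is false for general minuscule paths. Write $d=\dom(\vgamma)$, $A=b_{\eta_1}\otimes\cdots\otimes b_{\eta_{d-1}}$, $C=b_{\eta_{d+1}}\otimes\cdots\otimes b_{\eta_N}$. One does get $\varepsilon_\kappa(A)=\varphi_\kappa(A)=0$ (the prefix is highest weight of $\kappa$-weight $0$) and $\varepsilon_\kappa(b_{\eta_d})=1$, $\varphi_\kappa(b_{\eta_d})=0$; but with the tensor product convention forced by Lemma \ref{th:highlow}, $e_\kappa$ acts on the factor carrying the \emph{rightmost} unmatched minus, so it lands on the $d$-th factor only if $\varepsilon_\kappa(C)=0$, i.e.\ only if $\braket{\gamma_j,\kappa^\vee}\ge -1$ for all $j\ge d$. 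This is not automatic: for $SL_2$ the minuscule path $(0,-\omega,-2\omega)$ has $d=1$, yet $e$ applied to $b_{-\omega}\otimes b_{-\omega}$ acts on the second factor, producing $b_{-\omega}\otimes b_{\omega}$ (the path $(0,-\omega,0)$) rather than $b(r(\vgamma))=b_{\omega}\otimes b_{-\omega}$ (the path $(0,\omega,0)$). So you must establish and propagate the auxiliary invariant that every path occurring in the iteration $r^k(\vnu)$ satisfies $\braket{\gamma_j,\alpha^\vee}\ge -1$ for all $j$ and all simple roots $\alpha$. It holds for $\vnu$ itself, since $\vnu$ is a dominant path translated by the minuscule weight $-\lambda_1$, but showing it survives each application of $r$ is a genuine induction in the spirit of the lemma proving $R(\vmu)\in P(\vlambda^{(1)})$; without it your single-step identification, and hence the whole second half, is unproved.
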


\section{Rotation of the Satake basis}
\subsection{Affine Grassmannian}

Let $ \vG $ denote the Langlands dual group of $ G $. We let $\cO=\C[[t]]$ and $\cK=\C((t))$, and we define the \emph{affine Grassmannian} for $\vG$ by $\Gr=\vG(\cK)/\vG(\cO)$. Since weights of $G$ correspond to coweights of $\vG$, we can think of a weight $\lambda$ of $G$ as an element $t^\lambda$ of $\Gr$. For each dominant weight $ \lambda $, we write $\Gr(\lambda)=\vG(\cO)t^\lambda$; these are the orbits of $ \vG(\cO)$ on $\Gr $.

By general principles, the orbits of $ \vG(\cK) $ on $\Gr \times \Gr $ are in bijection with orbits of $ \vG(\cO) $ on $\Gr $.  For $ p,q \in Gr $, we write $ d(p,q) = \lambda $ if $ (p,q) $ lies in the same $\vG(\cK) $ orbit as $ (t^0, t^\lambda)$.  We think of $ d $ as a distance function on $ \Gr $.

We can now define the Satake fibre $F(\vlambda)$ as the variety of based $ m$-gons in $ \Gr $ with edges lengths given by $ \vlambda $,
$$F(\vlambda)=\{(L_0,\dots,L_m)\in\Gr^{m+1}|L_0=L_m=t^0,d(L_{i-1},L_i)=\lambda_i\}.$$

From the geometric Satake correspondence, we have the following result.
\begin{theorem} \label{th:mvbasis}
Let $\vlambda$ be a sequence of dominant minuscule weights. We have a natural isomorphism $$V(\vlambda)^G \cong H_\top(F(\vlambda),\C).$$
\end{theorem}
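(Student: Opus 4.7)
The plan is to deduce the isomorphism directly from the geometric Satake correspondence of Lusztig, Ginzburg, and Mirkovi\'c--Vilonen, which identifies the category of $\vG(\cO)$-equivariant perverse sheaves on $\Gr$ with finite-dimensional representations of $G$, sending $\IC(\lambda)$ to $V(\lambda)$ and convolution $\star$ to tensor product. Under this dictionary,
$$V(\vlambda)^G = \Hom_G(V(0), V(\vlambda)) \longleftrightarrow \Hom(\IC(0), \IC(\lambda_1) \star \cdots \star \IC(\lambda_m)).$$
Since $\IC(0)$ is the skyscraper sheaf at the basepoint $t^0$, the right-hand Hom space is canonically the perverse-degree-zero part of the stalk at $t^0$ of the convolution.

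Next I would make the convolution explicit, using that each $\lambda_i$ is minuscule. Then the orbit $\Gr(\lambda_i)$ is closed and smooth (a partial flag variety of $\vG$), so $\IC(\lambda_i) = \C_{\Gr(\lambda_i)}[\dim \Gr(\lambda_i)]$. Writing $\widetilde{\Gr}(\vlambda) = \Gr(\lambda_1) \twistedprod \cdots \twistedprod \Gr(\lambda_m)$ for the twisted product and $m \colon \widetilde{\Gr}(\vlambda) \to \Gr$ for the multiplication morphism, the convolution becomes
$$\IC(\lambda_1) \star \cdots \star \IC(\lambda_m) = m_* \C_{\widetilde{\Gr}(\vlambda)}[N], \qquad N = \sum_i \dim \Gr(\lambda_i).$$
By proper base change along $\{t^0\} \hookrightarrow \Gr$, the stalk at $t^0$ is identified with $H^*(F(\vlambda), \C)[N]$, since $m^{-1}(t^0) = F(\vlambda)$ by the very definition of the Satake fibre.

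The last step is to extract the perverse $H^0$. For minuscule sequences the convolution morphism $m$ is semi-small, which forces $\dim_\C F(\vlambda) \le N/2$ and hence $H^i(F(\vlambda), \C) = 0$ for $i > N$. Therefore the perverse degree zero part of the stalk at $t^0$ equals $H^N(F(\vlambda), \C) = H_\top(F(\vlambda), \C)$, spanned by the fundamental classes of top-dimensional irreducible components. Chaining together the identifications yields the theorem.

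The main technical hurdle is this final perversity-to-top-homology match: one must invoke semi-smallness of $m$ to rule out contributions from lower-dimensional strata of $F(\vlambda)$, and check that the top-dimensional components contribute precisely the multiplicity of the trivial representation in $V(\vlambda)$ rather than the (generally larger) full zero weight space. Everything else reduces to unwinding the Satake equivalence and applying proper base change.
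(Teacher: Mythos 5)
Your argument is correct and is exactly the standard derivation that the paper leaves implicit: the paper states Theorem \ref{th:mvbasis} without proof as a direct consequence of geometric Satake, and your unwinding (minuscule orbits closed and smooth, convolution via $m_*\C[N]$, proper base change, semi-smallness, Haines's equidimensionality making every component relevant) is precisely how that citation is justified in the literature. The only point to phrase carefully is that $\Hom(\IC(0), m_*\C[N])$ is computed by the degree-zero \emph{costalk} $H^0(i^!m_*\C[N]) \cong H^{BM}_N(F(\vlambda))$, which is literally the span of fundamental classes of top-dimensional components; your stalk version agrees with this by self-duality of the convolution, so the argument stands.
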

Moreover, from the work of Haines \cite{H}, the variety $F(\vlambda)$ is pure dimensional, so each irreducible component $Z \subseteq F(\vlambda)$ yields a vector $[Z] \in V(\vlambda)^G$. These vectors form a basis, the Satake basis.

In \cite{us}, we give an explicit description of the irreducible components of $F(\vlambda)$, given by measuring distances from $ t^0 $ to each of the $ L_i$. For $\vmu\in P(\vlambda)$, we define varieties $$Z_\vmu=\{(L_0,\dots,L_m)\in F(\vlambda)|L_i\in\Gr(\mu_i)\}.$$ We show that $F(\vlambda)=\bigsqcup_{\vmu\in P(\vlambda)} Z_\vmu$ and more over we have the following result:

\begin{theorem} Let $\vlambda$ be a dominant minuscule sequence of weights. For each $\vmu\in P(\vlambda)$, $Z_\vmu$ is a dense subset of one irreducible component of $F(\vlambda)$. The induced correspondence is a bijection between $P(\vlambda)$ and the irreducible components of $F(\vlambda)$.
\label{th:haines} \end{theorem}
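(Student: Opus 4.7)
The plan is to show each $Z_\vmu$ is irreducible of dimension $d := \dim F(\vlambda)$, and then conclude the bijection by counting. The $Z_\vmu$ are pairwise disjoint, since $\vmu$ is recovered from $(L_0,\ldots,L_m)$ by $\mu_i = d(t^0, L_i)$, and their union is $F(\vlambda)$. By Haines' purity theorem, $F(\vlambda)$ is pure of dimension $d$, so its number of irreducible components equals $\dim H_\top(F(\vlambda), \C) = \dim V(\vlambda)^G = |P(\vlambda)|$ by Theorem \ref{th:mvbasis}. Once each irreducible $Z_\vmu$ is known to have dimension $d$, each must be open and dense in a unique irreducible component of $F(\vlambda)$, and matching cardinalities forces $\vmu \mapsto \overline{Z_\vmu}$ to be a bijection.

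To prove irreducibility and compute dimension, I would build $Z_\vmu$ inductively via a tower of fibrations. For $0 \le k \le m$ set
\[
Z_\vmu^{(k)} = \{(L_0,\ldots,L_k) \in \Gr^{k+1} : L_0 = t^0,\ L_i \in \Gr(\mu_i),\ d(L_{i-1},L_i) = \lambda_i \text{ for } i \le k\},
\]
so $Z_\vmu^{(0)}$ is a point and $Z_\vmu^{(m)} = Z_\vmu$. The forgetful map $\pi_k : Z_\vmu^{(k)} \to Z_\vmu^{(k-1)}$ is $\vG(\cO)$-equivariant, and since $\vG(\cO)$ acts transitively on $\Gr(\mu_{k-1})$, $\pi_k$ is étale-locally trivial with fiber
\[
T_k = \{L \in \Gr(\mu_k) : d(t^{\mu_{k-1}}, L) = \lambda_k\}.
\]
Granted that each $T_k$ is irreducible of the expected dimension, induction yields that $Z_\vmu^{(m)}$ is irreducible, and the fiber dimensions sum and telescope (using $\mu_0 = \mu_m = 0$) to the required total~$d$.

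The main obstacle, and the technical heart of the argument, is the analysis of $T_k$. The locus $\{L : d(t^{\mu_{k-1}}, L) = \lambda_k\}$ is a translate of the minuscule orbit $\Gr(\lambda_k)$, which is a smooth partial flag variety of $\vG$; intersecting it with the $\vG(\cO)$-orbit through $t^{\mu_k}$ carves out a Schubert-cell-type stratum indexed by the Weyl-group element sending $\lambda_k$ to $\mu_k - \mu_{k-1}$. Minusculeness of $\lambda_k$, equivalently that every weight of $V(\lambda_k)$ has multiplicity one, is precisely what forces this intersection to be a single affine cell rather than several. I would establish this either by a direct Bruhat/Iwahori decomposition in the loop group, or by recognising $T_k$ as a minuscule intersection of a $\vG(\cO)$-orbit with a semi-infinite orbit, whose irreducibility and dimension in the minuscule case are standard outputs of Mirković--Vilonen theory.
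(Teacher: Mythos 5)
Your architecture is, in substance, the paper's own: the tower of fibrations $Z_\vmu^{(k)} \to Z_\vmu^{(k-1)}$ with fibre $T_k$ is exactly the twisted-product description (\ref{eq:twistprod}), the fibre analysis is Lemma \ref{lem:element}, and the endgame (purity from Haines plus the component count from Theorem \ref{th:mvbasis} and $|P(\vlambda)| = \dim V(\vlambda)^G$) is the intended one. Note, though, that the paper does not reprove this theorem --- it imports it, together with Lemma \ref{lem:element} and the covering $F(\vlambda)=\bigsqcup_\vmu Z_\vmu$, from [FKK] --- so the step you ``grant'' (irreducibility and dimension of $T_k$) is not a technicality to be outsourced: it is the entire mathematical content, and your proposal is really a reduction of the theorem to Lemma \ref{lem:element}. (The covering assertion is also not free; it needs the same minuscule analysis, via Lemma \ref{lem:weights}, to see that $i \mapsto d(t^0,L_i)$ is always a path in $P(\vlambda)$.)

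Two points in your sketch of the key step need correction. First, $T_k$ is not ``a single affine cell'': by Lemma \ref{lem:element} it is a single orbit $M_+(\mu_{k-1})t^{w\lambda_k}$ of a parabolic subgroup on the minuscule flag variety $\Gr(\lambda_k)\cong \vG/P_{\lambda_k}$ --- irreducible, but in general an affine bundle over a smaller flag variety (for $\mu_{k-1}=0$ it is all of $\Gr(\lambda_k)$). Second, $T_k$ is the intersection of two \emph{spherical} orbit conditions, $d(t^0,\cdot)=\mu_k$ and $d(t^{\mu_{k-1}},\cdot)=\lambda_k$, i.e.\ orbits of conjugates of $\vG(\cO)$ based at two different points; it is not the intersection of a $\vG(\cO)$-orbit with a semi-infinite ($N(\cK)$-)orbit, so the Mirkovi\'c--Vilonen-cycle route does not apply as stated. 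The viable route is your first one: the group $\vG(\cO)\cap t^{\mu_{k-1}}\vG(\cO)t^{-\mu_{k-1}}$ acts on $\{L : d(t^{\mu_{k-1}},L)=\lambda_k\}\cong \vG/P_{\lambda_k}$ through the finite-dimensional parabolic $M_+(\mu_{k-1})$, whose orbits are indexed by $W_{\mu_{k-1}}\backslash W/W_{\lambda_k}$, and one must then show that the remaining condition $d(t^0,\cdot)=\mu_k$ picks out exactly one such double coset; this is where minusculeness enters, through the fact (Lemma \ref{lem:weights}) that the dominant representative of $\mu_{k-1}+w\lambda_k$ is obtained by acting by an element of $W_{\mu_{k-1}}$, so the value of $d(t^0,\cdot)$ determines the $W_{\mu_{k-1}}$-orbit of $w\lambda_k$. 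With that supplied, your telescoping of dimensions and the counting argument close the proof correctly.
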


In \cite{us}, following ideas from Haines \cite{H}, we also gave a more explicit construction of $Z_\vmu$. We begin with the following lemma:
\begin{lemma}\label{lem:element} Let $\mu, \nu$ and $\lambda$ be dominant weights, suppose that $\lambda$ is minuscule and that there exists a $w\in W$ with $\mu+w\lambda=\nu$. The variety $$T(\mu,\lambda,\nu)=\{p\in\Gr(\lambda)|d(t^{-\mu},p)=\nu\}$$ is isomorphic to $M_+(\mu)t^{w\lambda}$ where $M_+(\mu)=Stab_G(t^{-\mu})$ is the standard parabolic for $\mu$ in $G$. Moreover, suppose that $(a,b,c)\in\Gr^3$ with $d(c,b)=\lambda$, $d(b,a)=\mu$ and $d(c,a)=\nu$. Then there exists an element of $g\in\vG(\cK)$ such that $(ga,gb,gc)=(t^0,t^{-\mu},t^{-\nu})$.
\end{lemma}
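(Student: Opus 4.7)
The plan is to prove the two statements in turn. The first is a parabolic Bruhat decomposition argument; the second is a normalization of triples via iterated transitivity of the $\vG(\cK)$-action together with the first part.

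For the isomorphism $T(\mu,\lambda,\nu)\cong M_+(\mu)\,t^{w\lambda}$, the key fact is that minusculeness of $\lambda$ identifies $\Gr(\lambda)$ with the partial flag variety $\vG/P_\lambda$, on which the $\vG(\cO)$-action factors through $\vG$. The parabolic Bruhat decomposition then gives
\[
\Gr(\lambda) \;=\; \bigsqcup_{w'\in W_\mu\backslash W/W_\lambda} M_+(\mu)\cdot t^{w'\lambda}.
\]
Because $M_+(\mu)$ stabilizes $t^{-\mu}$ and $d$ is $\vG(\cK)$-invariant, the value $d(t^{-\mu},p)$ is constant on each $M_+(\mu)$-orbit; computing on the base point $t^{w'\lambda}$ (by translating by $t^\mu$) it equals the dominant representative of $\mu+w'\lambda$. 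So each orbit lies inside a single stratum $T(\mu,\lambda,\nu')$. To conclude equality I would invoke Lemma~\ref{lem:weights} with $\gamma=\mu+w'\lambda$ and $\beta=\mu$: the minimal $u\in W$ making $u(\mu+w'\lambda)$ dominant satisfies $u\in W_\mu$, so $uw'\lambda=\nu-\mu=w\lambda$, forcing $w'\in u^{-1}wW_\lambda\subseteq W_\mu wW_\lambda$. This is exactly the injectivity of the correspondence from double cosets to reachable dominant weights, giving $T(\mu,\lambda,\nu)=M_+(\mu)\,t^{w\lambda}$ as desired.

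For the second statement I would normalize the triple $(a,b,c)$ in three stages: (i) use $\vG(\cK)$-transitivity on $\Gr$ to translate $a$ to $t^0$; (ii) since $d(b,t^0)=\mu$ places $b$ in the $\vG(\cO)$-orbit $\vG(\cO)\cdot t^{-\mu}$, apply an element of $\vG(\cO)=\mathrm{Stab}_{\vG(\cK)}(t^0)$ to send $b$ to $t^{-\mu}$; (iii) we now have $(t^0,t^{-\mu},c')$ with $d(c',t^0)=\nu$ and $d(c',t^{-\mu})=\lambda$, and a direct calculation using $\nu-\mu=w\lambda$ shows $t^{-\nu}$ has the same pair of distances, so it remains to transport $c'$ to $t^{-\nu}$ by an element of the joint stabilizer $\vG(\cO)\cap t^{-\mu}\vG(\cO)t^\mu$.

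The main obstacle is the transitivity required in stage (iii). Because $d$ is not symmetric, the locus $\{c : d(c,t^0)=\nu,\ d(c,t^{-\mu})=\lambda\}$ is not of the form $T(\mu,-,-)$ and the first part does not apply directly. I would establish the needed transitivity by running a parallel Bruhat-style analysis on the orbit $\Gr(\nu^*)=\vG(\cO)\cdot t^{-\nu}$ (which contains the locus): the joint stabilizer surjects onto $M_+(\mu)$ under evaluation at $t=0$, and the minuscule weight $\lambda$ now appears as the secondary distance, so Lemma~\ref{lem:weights} again pins down a unique orbit of the joint stabilizer through $t^{-\nu}$.
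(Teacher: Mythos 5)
Your first part is a correct reconstruction of the argument (the paper itself only cites \cite{us} for it): the identification $\Gr(\lambda)\cong \vG/P_\lambda$ for minuscule $\lambda$, the decomposition into $M_+(\mu)$-orbits indexed by $W_\mu\backslash W/W_\lambda$, the computation $d(t^{-\mu},t^{w'\lambda})=(\mu+w'\lambda)_+$, and the use of Lemma~\ref{lem:weights} to isolate the double coset of $w$ all fit together correctly. Stages (i) and (ii) of your second part are also fine.

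The gap is in stage (iii). After normalizing $(a,b)=(t^0,t^{-\mu})$, the locus of possible third vertices is
$X=\{c: d(c,t^0)=\nu,\ d(c,t^{-\mu})=\lambda\}=\Gr(-w_0\nu)\,\cap\, t^{-\mu}\Gr(-w_0\lambda)$,
and you propose to run the Bruhat-style analysis on $\Gr(-w_0\nu)=\vG(\cO)\cdot t^{-\nu}$, using that the joint stabilizer $H=\vG(\cO)\cap t^{-\mu}\vG(\cO)t^{\mu}$ surjects onto $M_+(\mu)$ under evaluation at $t=0$. But $-w_0\nu$ is not minuscule in general, so $\Gr(-w_0\nu)$ is not a partial flag variety of $\vG$ and the $\vG(\cO)$-action on it does not factor through evaluation at $t=0$; consequently the orbits of $H$ on $\Gr(-w_0\nu)$ are not indexed by Weyl group double cosets, and knowing the image of $H$ in $\vG$ gives no control over them. (This is where minusculeness of $\lambda$ must enter geometrically: for two non-minuscule distances, transitivity on such triples fails in general.) The repair is to run the analysis on the other factor: $t^{-\mu}\Gr(-w_0\lambda)$ is a translate of a \emph{minuscule} orbit, and after translating by $t^{\mu}$ it becomes $\vG/P_{-w_0\lambda}$ with $H$ acting through the image of $t^{\mu}Ht^{-\mu}\subset\vG(\cO)$ under evaluation at $t=0$, which is the \emph{opposite} parabolic $\mathrm{Stab}_{\vG}(t^{\mu})$, not $M_+(\mu)$. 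Its orbits are again indexed by $W_\mu\backslash W/W_\lambda$, with representatives $t^{-\mu-\tau}$ for $\tau\in W\lambda$; the condition $d(t^{-\mu-\tau},t^0)=\nu$ reads $(\mu+\tau)_+=\nu$, and Lemma~\ref{lem:weights} applied to $\gamma=\mu+\tau$, $\beta=\mu$ forces $\tau\in W_\mu\, w\lambda$, i.e.\ a unique orbit, which contains $t^{-\mu-w\lambda}=t^{-\nu}$. With this change of ambient orbit (and of parabolic) your argument closes.
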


\begin{proof} The first part is contained in \cite{us} and the second part is a variation on the proof of the first.
\end{proof}

If $\vmu\in P(\vlambda)$, by definition there exists $w_i\in W$ such that $\mu_{i-1}+w_i\lambda_i=\mu_i$. We can then rewrite the definition for $Z_\vmu$ in another way: \begin{equation} \label{eq:twistprod} Z_\vmu=\{(L_0,\dots,L_m)|L_0=t^0,L_1\in\Gr(\lambda_1),L_i\in f_i(L_{i-1})T(\mu_{i-1},\lambda_i,\mu_i)\}.\end{equation} Here $f_i(L_{i-1})$ is an element of $\vG(\cK)$ that brings $(t^0,t^{-\mu_{i-1}})$ to $(L_{i-1},t^0)$. Such an element exists because $L_{i-1}\in\Gr(\mu_{i-1})$, but we must check that the product is independent of choice.

We see that $p\in f_i(L_{i-1})T(\mu_{i-1},\lambda_i,\mu_i)$ satisfies both $d(L_{i-1},p)=\lambda$ and $d(t^0,p)=\mu_i$. In fact, multiplying by $f_i(L_{i-1})$ is an isomorphism between $T(\mu_{i-1},\lambda_i,\mu_i)$ and the set $\{p\in\Gr|d(L_{i-1},p)=\lambda, d(t^0,p)=\mu_i\}$ whose definition is independent of the choice of $f_i(L_{i-1})$. On the other hand, this also shows that if $L_{i-1}\in\Gr_{\mu_{i-1}}$ then $L_i\in f_i(L_{i-1})T(\mu_{i-1},\lambda_i,\mu_i)$ satisfies both $d(L_{i-1},L_i)=\lambda$, $d(t^0,L_i)=\mu_i$. Thus by induction on $i$, the two definitions of $Z_\vmu$ are the same.

\subsection{Rotation of the Satake basis}

As seen in section~\ref{sec:rotpath}, there is a natural map between $P({\vlambda})$ and $P(\vlambda^{(1)})$ given by rotation. By Theorem~\ref{th:haines} these sets give rise to the components of $F(\vlambda)$ and $F(\vlambda^{(1)})$, and hence we have a natural correspondence between the components of these two Satake fibres. On the other hand, we now see that, up to sign, the action of rotation of tensor factors sends the Satake basis of $V(\vlambda)^G$ to the Satake basis of $V(\vlambda^{(1)})^G$.

Let $ Z $ be an irreducible component of $ F(\vlambda) $.  Since $ G(\cO) $ is connected and acts on $ F(\vlambda) $, we see that $ Z $ is $ G(\cO) $ invariant.

Define
$$
\tilde{Z} = \{([g_1^{-1} g_2], \dots, [g_1^{-1} g_n], t^0) : ([g_1], \dots, [g_n]) \in  Z \}  \subset F(\vlambda^{(1)})
$$
(since $ Z $ is $ G(\cO) $-invariant, $ \tilde{Z} $ is well-defined).

It is easy to see that this gives us a bijection
\eq{e:bij}{\Irr(F(\vlambda)) \cong    \Irr(F(\vlambda^{(1)}))}
and compatible isomorphisms of vector spaces
$$ R^{geom} : H_\top(F(\vlambda)) \cong
   H_\top(F(\vlambda^{(1)})).$$
which we call geometric rotation of components.

To compare geometric rotation and rotation of tensor factors, we recall the following result which is a consequence Theorem 4.5 of \cite{us} (see also the commutative diagram (8) in \cite{us}).

\begin{theorem} \label{th:rotation}
Let $ \vlambda $ be a minuscule sequence.  The diagram
\begin{equation}
\begin{tikzpicture}[description/.style={fill=white,inner sep=2pt},baseline]
\matrix (m) [matrix of math nodes, row sep=3em,
column sep=2.5em, text height=1.5ex, text depth=0.25ex]
{ H_\top(F(\vlambda)) & V(\vlambda)^G \\
H_\top(F(\vlambda^{(1)})) & V(\vlambda^{(1)})^G \\ };
\path[->] (m-1-1) edge (m-1-2) edge (m-2-1) (m-2-1) edge (m-2-2) (m-1-2) edge (m-2-2) ;
\end{tikzpicture}
\label{e:diagram} \end{equation}
commutes up to a factor of $ (-1)^{\langle \lambda_1, 2\rho^\vee \rangle}$ where the horizontal edges come from Theorem \ref{th:mvbasis}, the left vertical edge is given by geometric rotation $R^{geom} $ and the right vertical edge is given by rotation of tensor factors.
\end{theorem}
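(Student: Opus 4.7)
The plan is to deduce this theorem from Theorem 4.5 of \cite{us} and the commutative diagram (8) therein, which provide the key geometric comparison between our rotation operator and the commutativity constraint of the Satake category. My first step will be to reinterpret rotation of tensor factors algebraically: writing $V(\vlambda)^G = \Hom_G(\C, V(\lambda_1)\otimes \cdots \otimes V(\lambda_m))$, rotation by one step amounts to precomposition with the commutativity constraint $\sigma_{V(\lambda_1), V(\lambda_2)\otimes\cdots\otimes V(\lambda_m)}$ of the symmetric monoidal structure on $G$-representations.

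Next, I would unwind the definition of $\tilde Z$ under the convolution realization of the Satake fibre as the preimage of $t^0$ in $\Gr(\lambda_1) \twistedprod \cdots \twistedprod \Gr(\lambda_m)$. The re-basing by $g_1^{-1}$ that defines $\tilde Z$ corresponds precisely to the geometric operation that implements fusion of the first factor past the remaining ones on the Beilinson--Drinfeld Grassmannian. This identification, combined with diagram (8) of \cite{us}, reduces the desired commutativity (up to sign) of our square to the statement of Theorem 4.5 of \cite{us}.

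It remains to compute the sign. Under geometric Satake the commutativity constraint is a super-symmetry, and since $\IC_{\lambda_j}$ is concentrated in cohomological degrees of parity $\langle \lambda_j, 2\rho^\vee \rangle$, swapping $\IC_{\lambda_1}$ with $\IC_{\lambda_2}\boxtimes\cdots\boxtimes\IC_{\lambda_m}$ contributes the Koszul sign $(-1)^{\langle \lambda_1, 2\rho^\vee\rangle \cdot \langle |\vlambda|-\lambda_1, 2\rho^\vee \rangle}$. Since $|\vlambda|$ lies in the root lattice and $\langle \alpha, 2\rho^\vee\rangle \in 2\Z$ for every $\alpha$ in the root lattice, the exponent reduces modulo $2$ to $\langle \lambda_1, 2\rho^\vee \rangle^2 \equiv \langle \lambda_1, 2\rho^\vee \rangle$, giving the sign stated in the theorem. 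The main obstacle throughout is this sign bookkeeping: one must check that the Mirkovi\'c--Vilonen normalization of the commutativity constraint matches the isomorphism of Theorem~\ref{th:mvbasis} used to identify both vertical arrows of the diagram, so that the super-braiding and not the naive braiding appears.
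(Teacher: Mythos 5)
Your proposal is correct and follows the same route as the paper, which offers no independent proof of this theorem but simply recalls it as a consequence of Theorem 4.5 and the commutative diagram (8) of \cite{us}. Your sign bookkeeping --- reducing the Koszul exponent $\langle \lambda_1, 2\rho^\vee\rangle\cdot\langle |\vlambda|-\lambda_1, 2\rho^\vee\rangle$ modulo $2$ using that $|\vlambda|$ lies in the root lattice, so $\langle |\vlambda|, 2\rho^\vee\rangle$ is even --- is a correct account of where the factor $(-1)^{\langle \lambda_1, 2\rho^\vee \rangle}$ comes from, a detail the paper leaves entirely to the citation.
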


We are now ready to state and prove a precise version of Theorem \ref{th:intro1}.
\begin{theorem}\label{th:rotcomp}
Let $ \vlambda $ be a dominant minuscule sequence and let $ \vmu \in P(\vlambda) $.  Then $ R([\overline{Z_\vmu}]) = (-1)^{\langle \lambda_1, 2\rho^\vee \rangle}[\overline{Z_{R(\vmu)}}]\in H_\top(F(\vlambda^{(1)}))$.
\end{theorem}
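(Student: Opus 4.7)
By Theorem~\ref{th:rotation}, the stated sign $(-1)^{\braket{\lambda_1,2\rho^\vee}}$ comes from that theorem, so it suffices to prove $R^{geom}([\overline{Z_\vmu}]) = [\overline{Z_{R(\vmu)}}]$. Since $R^{geom}$ is induced by an isomorphism of varieties $F(\vlambda) \cong F(\vlambda^{(1)})$, it sends irreducible components bijectively to irreducible components; combined with Theorem~\ref{th:haines}, this produces a bijection $\sigma: P(\vlambda) \to P(\vlambda^{(1)})$ characterized by $R^{geom}([\overline{Z_\vmu}]) = [\overline{Z_{\sigma(\vmu)}}]$. The task is to show $\sigma = R$. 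A generic point $(L_0,\ldots,L_m) \in Z_\vmu$ rotates to a generic point of $Z_{\sigma(\vmu)}$, and its image is $(t^0, L_1^{-1}L_2, \ldots, L_1^{-1}L_m, t^0)$, yielding $\sigma(\vmu)_i = d(t^0, L_1^{-1}L_{i+1}) = d(L_1, L_{i+1})$ for generic $(L_0,\ldots,L_m)$. Thus it suffices to prove $d(L_1,L_{i+1}) = R(\vmu)_i$ for generic points of $Z_\vmu$ and all $i = 1,\ldots,m-1$.

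We argue by induction on $i$. The base case $i=1$ is automatic: $d(L_1,L_2) = \lambda_2$ identically by the edge-length condition, and $R(\vmu)_1 = \vnu_1 = \lambda_2$ since $\vnu_1$ is already dominant and hence preserved by $r$. For the inductive step, assume $d(L_1,L_i) = R(\vmu)_{i-1}$ generically. By the twisted product description~\ref{eq:twistprod} and Lemma~\ref{lem:element}, given $(L_0,\ldots,L_i)$ the vertex $L_{i+1}$ varies generically in $f_{i+1}(L_i) \cdot M_+(\mu_i) \, t^{w_{i+1}\lambda_{i+1}}$, where $\mu_{i+1} = \mu_i + w_{i+1}\lambda_{i+1}$. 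Since the constructible function $d(L_1, L_{i+1})$ is constant on a dense open subset of the irreducible variety $Z_\vmu$, we may compute its value on a generic $L_{i+1}$ in this MV cell; applying Lemma~\ref{lem:element} to the triangle $(L_1, L_i, L_{i+1})$ forces the generic value to be a dominant weight of the form $R(\vmu)_{i-1} + u\lambda_{i+1}$ for some $u \in W$.

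The heart of the argument is matching this generic $u$ with the Weyl element coming from the combinatorial rotation. Combinatorially, $R(\vmu)_i$ is obtained by dominantifying an intermediate weight via the minimal Weyl element which, by Lemma~\ref{lem:weights}, lies in $W_{R(\vmu)_{i-1}}$; geometrically, the generic $u$ arises from the parabolic action of $M_+(\mu_i)$ on $t^{w_{i+1}\lambda_{i+1}}$ (whose Weyl group is $W_{\mu_i}$). The main obstacle is the reconciliation of these two descriptions, since $W_{R(\vmu)_{i-1}}$ and $W_{\mu_i}$ are distinct subgroups of $W$ in general. We expect the proof to proceed by tracking the equivariant isomorphism $T(\mu_i,\lambda_{i+1},\mu_{i+1}) \cong M_+(\mu_i) t^{w_{i+1}\lambda_{i+1}}$ of Lemma~\ref{lem:element} through the geometry of the triangle $(L_1,L_i,L_{i+1})$, so that the generic geometric position corresponds exactly to the minimal dominantification supplied by Lemma~\ref{lem:weights}.
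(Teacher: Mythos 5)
Your reduction is the same as the paper's: apply Theorem~\ref{th:rotation} to strip off the sign, observe that $R^{geom}$ permutes components, and then show that for a generic point of $Z_\vmu$ one has $d(L_1,L_{i+1}) = R(\vmu)_i$ for all $i$. Up to and including the observation that the generic value of $d(L_1,L_{i+1})$ must be a dominant weight of the form $R(\vmu)_{i-1}+u\lambda_{i+1}$, your argument is sound. But the last paragraph is where the proof actually has to happen, and there you only write ``we expect the proof to proceed by tracking the equivariant isomorphism\dots so that the generic geometric position corresponds exactly to the minimal dominantification.'' That is a statement of the difficulty, not a resolution of it: nothing you have written pins down which $u\in W$ occurs generically, and a priori the generic $L_{i+1}$ in the cell $M_+(\mu_i)t^{w_{i+1}\lambda_{i+1}}$ could sit at a different distance from $L_1$ than the one produced by the combinatorial rotation. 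So the proposal has a genuine gap at precisely the step you flagged.

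The paper closes this gap with a concrete density lemma: if $\mu'+a\lambda=\nu'$ and there is $w\in W_\mu$ with $w(\nu'-\mu')+\mu=\nu$, then $T(\mu,\lambda,\nu)\cap T(\mu',\lambda,\nu')$ is dense in $T(\mu',\lambda,\nu')$. The proof is by Bruhat decomposition: $T(\mu',\lambda,\nu')=M_+(\mu')t^{a\lambda}$ and $T(\mu,\lambda,\nu)\supseteq M_+(\mu)t^{wa\lambda}=M_+(\mu)t^{a\lambda}$ (the last equality is exactly where $w\in W_\mu$ is used), and both parabolic orbits contain the single Bruhat cell $B_+t^{a\lambda}$, which is dense in $M_+(\mu')t^{a\lambda}$ because $a$ is extremal in its $W_{\mu'}\backslash W/W_\lambda$ double coset. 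The hypothesis $w\in W_\mu$ is supplied by Lemma~\ref{lem:weights} via the construction of $R$, i.e.\ $w_i\in W_{R(\vmu)_{i-2}}$ with $w_i(\mu_i-\mu_{i-1})+R(\vmu)_{i-2}=R(\vmu)_{i-1}$. This is precisely the reconciliation of $W_{R(\vmu)_{i-1}}$ with $W_{\mu_i}$ that you identified as the main obstacle: the point is not to compare the two stabilizers directly, but to exhibit a common dense Bruhat cell inside both $T$-varieties. Without this (or an equivalent) argument, your induction does not go through.
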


Utilizing (\ref{e:diagram}), this theorem reduces to showing that $$R^{geom}([\overline{Z_\vmu}]) = [\overline{Z_{R(\vmu)}}].$$ For this, it is sufficient to see that the subset of $Z_\vmu$ given by $$Y_\vmu=\{(L_0,\dots,L_m)\in Z_\vmu| d(L_1,L_i)=R(\vmu)_{i-1}\}$$ is dense in $Z_\vmu$. To do this we will express $Y_\vmu$ in form of (\ref{eq:twistprod}) using the following lemma:

\begin{lemma} Let $\mu,\nu,\mu',\nu'$ be dominant weights and $\lambda$ a dominant minuscule weight such that there exists $a\in W$ with $\mu'+a\lambda=\nu'$ and $w\in W_{\mu}$ with $w(\nu'-\mu')+\mu=\nu$. The variety $T(\mu,\lambda,\nu)\cap T(\mu',\lambda,\nu')$ is dense in $T(\mu',\lambda,\nu')$.
\end{lemma}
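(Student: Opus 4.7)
The strategy is to apply Lemma~\ref{lem:element} to identify both varieties as orbits of parabolic subgroups on the (finite-dimensional, minuscule) partial flag variety $\Gr(\lambda)$, then exhibit an explicit point of the intersection and argue openness via the parabolic orbit stratification.

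First, invoke Lemma~\ref{lem:element} twice. From $\mu'+a\lambda=\nu'$ we get $T(\mu',\lambda,\nu')=M_+(\mu')\cdot t^{a\lambda}$, and combining the two hypotheses yields $\mu+(wa)\lambda=\nu$, so $T(\mu,\lambda,\nu)=M_+(\mu)\cdot t^{wa\lambda}$. In particular $T(\mu',\lambda,\nu')$ is irreducible, being the orbit of a connected parabolic subgroup. Density will therefore follow once the intersection is shown to be nonempty and open. I would take as candidate point $p_{0}:=t^{a\lambda}$, which is trivially in $T(\mu',\lambda,\nu')$. To see $p_{0}\in T(\mu,\lambda,\nu)$, translate by $t^{\mu}$ to compute $d(t^{-\mu},t^{a\lambda})$: this equals the dominant weight $\eta$ with $t^{\mu+a\lambda}\in\Gr(\eta)$. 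Using $w\in W_\mu$, we have $w(\mu+a\lambda)=\mu+wa\lambda=\nu$, so $\mu+a\lambda$ lies in the Weyl orbit of the dominant weight $\nu$, forcing $\eta=\nu$.

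For openness, I would use that since $\lambda$ is minuscule, $\Gr(\lambda)\cong \vG/P_{\lambda}$ is a finite-dimensional partial flag variety of $\vG$, on which $M_+(\mu)$ acts with finitely many orbits, namely the strata $T(\mu,\lambda,\eta)$ for dominant $\eta$. Restricting this stratification to the irreducible variety $T(\mu',\lambda,\nu')$ yields a finite decomposition into locally closed pieces $T(\mu',\lambda,\nu')\cap T(\mu,\lambda,\eta)$, exactly one of which is open and dense. Since $p_{0}$ sits in the $\eta=\nu$ piece, the task reduces to showing that this piece is the open one, equivalently to establishing $T(\mu',\lambda,\nu')\subseteq\overline{T(\mu,\lambda,\nu)}$.

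I expect this last inclusion to be the main obstacle. By the standard closure relations for parabolic orbits on flag varieties, $\overline{T(\mu,\lambda,\eta)}=\bigsqcup_{\eta'\leq\eta}T(\mu,\lambda,\eta')$ with respect to the dominance order, so the required containment amounts to the uniform bound $d(t^{-\mu},p)\leq\nu$ for all $p\in T(\mu',\lambda,\nu')=M_+(\mu')\cdot t^{a\lambda}$. I would establish this by semi-continuity of the $\vG(\cO)$-distance function, together with the identification of $\nu$ as the dominant representative of $\mu+a\lambda=\mu+(\nu'-\mu')$: this pins down $\nu$ as the maximal value of $d(t^{-\mu},\cdot)$ on the connected orbit $M_+(\mu')\cdot t^{a\lambda}$, after which the density conclusion follows immediately.
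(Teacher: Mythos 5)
Your opening moves coincide with the paper's: by Lemma~\ref{lem:element} both sets are parabolic orbits on the minuscule flag variety $\Gr(\lambda)$, namely $T(\mu',\lambda,\nu')=M_+(\mu')t^{a\lambda}$ and $T(\mu,\lambda,\nu)=M_+(\mu)t^{wa\lambda}$, and your verification that $t^{a\lambda}$ lies in both (via $w(\mu+a\lambda)=\mu+wa\lambda=\nu$, using $w\in W_\mu$) is correct. The gap is in the final density step. You reduce the lemma to the inequality $d(t^{-\mu},p)\le\nu$ for \emph{every} $p\in M_+(\mu')t^{a\lambda}$, i.e.\ to the assertion that $\nu$ is the maximal value of $d(t^{-\mu},\cdot)$ on this orbit, and you propose to get this from ``semi-continuity.'' But semi-continuity only says that the maximal value is attained on a dense open subset; it says nothing about what that value is. Computing $d(t^{-\mu},t^{a\lambda})=\nu$ at the single point $t^{a\lambda}$ bounds the generic value from \emph{below}: a priori the generic value on the orbit could be some $\eta$ strictly larger than (or incomparable with) $\nu$, in which case the $\nu$-stratum would meet $T(\mu',\lambda,\nu')$ in a non-dense locally closed piece and the conclusion would fail. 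So the crucial inequality is asserted rather than proved, and it is essentially equivalent to the lemma itself.

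The missing ingredient --- and the way the paper closes the argument --- is to work with a Borel orbit rather than a single point. Since $w\in W_\mu$ one has $M_+(\mu)t^{wa\lambda}=M_+(\mu)t^{a\lambda}$, and since $B_+\subseteq M_+(\mu)\cap M_+(\mu')$, the whole orbit $B_+t^{a\lambda}$ lies in $T(\mu,\lambda,\nu)\cap T(\mu',\lambda,\nu')$. Moreover $B_+t^{a\lambda}$ is the \emph{open} Borel orbit in $M_+(\mu')t^{a\lambda}=\bigsqcup_{a'\in W_{\mu'}a}B_+t^{a'\lambda}$: for any simple root $\alpha$ with $\braket{\mu',\alpha^\vee}=0$ we get $\braket{a\lambda,\alpha^\vee}=\braket{\nu',\alpha^\vee}\ge 0$, so $a\lambda$ is dominant for the Levi of $M_+(\mu')$ and $B_+t^{a\lambda}$ has maximal dimension among these orbits. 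Hence the intersection contains a dense subset of $T(\mu',\lambda,\nu')$, which finishes the proof; your inequality $d(t^{-\mu},\cdot)\le\nu$ on all of $T(\mu',\lambda,\nu')$ then follows a posteriori from the closedness of $\{p:d(t^{-\mu},p)\le\nu\}$. If you want to keep your stratification scheme instead, you must replace the appeal to semi-continuity by a direct combinatorial proof that the dominant representative of $\mu+ua\lambda$ is $\le\nu$ for every $u\in W_{\mu'}$, which is a genuine statement needing the minuscule hypothesis, not a formal consequence of constructibility.
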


\begin{proof} If $p\in T(\mu,\lambda,\nu)\cap T(\mu',\lambda,\nu')$ then $d(t^0,p)=\lambda$, so $p\in \Gr_{\lambda}=G^\vee/M(\lambda)$. Consider the stabilizers of $t^{-\mu}$ and $t^{-\mu'}$, the standard parabolics $M_+(\mu)$ and $M_+(\mu')$ respectively. We also need $d(t^{-\mu'},p)=\nu'$, so since $\mu'+a\lambda=\nu'$ we have $p\in M_+(\mu')t^{a\lambda}$ and we note that $a$ is the longest element (under the opposite Bruhat order) of its $W_{\mu'}\setminus W/W_{\lambda}$ orbit. On the other hand, we have $\mu+wa\lambda=\nu$ since $w(\nu'-\mu')+\mu=\nu$. Thus we must also have $p\in M_+(\mu)t^{wa\lambda}$ because $d(t^{-\mu},p)=\nu$, but $w\in W_\mu$, so we see that $M_+(\mu)t^{wa\lambda}=M_+(-\mu)t^{a\lambda}$. Considering the decompositions into standard Borel orbits: $M_+(\mu')t^{a\lambda}=\bigsqcup_{a'\in W_{\mu'}a}B_{+}t^{a'\lambda}$ and $M_+(\mu)t^{a\lambda}=\bigsqcup_{a'\in W_{\mu}a}B_{+}t^{a'\lambda}$. The intersection of these two contains $B_{+}t^{a\lambda}$ which is dense in $M_+(\mu')t^{a\lambda}=T(\mu',\lambda,\nu')$.
\end{proof}

We can now give a proof of Theorem~\ref{th:rotcomp}:
\begin{proof}[Proof of Theorem~\ref{th:rotcomp}]
The variety $Y_\vmu$ can now be described as a twisted product:
\begin{multline*}
\{(L_0,\dots,L_m)\in\Gr^m|L_0=t^0,L_1\in\Gr(\lambda_1), \\
 L_i\in f_i(L_1,L_{i-1})T(R(\vmu)_{i-2},\lambda_i,R(\vmu)_{i-1})\cap T(\mu_{i-1},\lambda_i,\mu_i)\}.
\end{multline*}

Here $f_i(L_1,L_{i-1})$ is an element of $\vG(\cK)$ taking $(t^0,t^{-R(\mu)_{i-2}},t^{-\mu_{i-1}})$ to $(L_{i-1},L_1,t^0)$ as given by Lemma~\ref{lem:element}. First, we must see that the above expression is well defined, since $ f(L_1, L_{i-1}) $ is not uniquely defined.  This follows with the same argument as given in the remarks following Lemma~\ref{lem:element}. Next we check that this is indeed the variety $Y_\vmu$. In general $L_i$ is restricted by 3 conditions: $d(L_0=t^0,L_i)=\mu_i$, $d(L_1,L_i)=R(\vmu)_{i-1}$ and $d(L_{i-1},L_i)=\lambda_i$. If the point $L_{i-1}$ satisfies its versions of these three conditions, then the set $f_i(L_1,L_{i-1})T(R(\vmu)_{i-2},\lambda_i,R(\vmu)_{i-1})\cap T(\mu_{i-1},\lambda_i,\mu_i)$ is exactly all possible points in $\Gr$ satisfying the conditions on $L_i$.

To see that $Y_\vmu$ is dense in $Z_\vmu$ we use the previous lemma. Since $\vmu\in P(\vlambda)$, there exists $a_i\in W$ with $\mu_{i-1}+a_i\lambda_i=\mu_i$ and by construction there exists $w_i\in W_{R(\vmu)_{i-2}}$ with $w_i(\mu_i-\mu_{i-1})+R(\vmu)_{i-2}=R(\vmu)_{i-1}$. Thus by the previous lemma, the variety $T(R(\vmu)_{i-2},\lambda_i,R(\vmu)_{i-1})\cap T(\mu_{i-1},\lambda_i,\mu_i)$ is dense in $T(\mu_{i-1},\lambda_i,\mu_i)$ for each $i$ and thus the result is proven.
\end{proof}

\section{$q$-analog of tensor product multiplicity}
\subsection{The quiver variety}
In this section, we specialize to the case where $ G $ is simply-laced.  Let $ \vlambda $ be a minuscule sequence.  From $ \vlambda $, we produce a vector $ \bw = (\bw(\vlambda)_i)_{i \in I} $ by defining
$$
\bw(\vlambda)_i = |\{ 1 \le j \le m : \lambda_j = \omega_i \} |
$$
We fix $ \bw $ for the remainder of this section.

We consider the set
$$ \mathcal V(\bw) := \{ \bv : \sum w_i \omega_i - \sum v_i \alpha_i \text{ is dominant }\} $$
Note that if $ \Hom(V(\mu), V(\vlambda))  \ne 0 $, then $ \mu = \mu(\bv) = |\vlambda| - \sum v_i \alpha_i $ for $ \bv \in \mathcal V(\bw) $.

For any $ \bv $, we consider the smooth Nakajima quiver variety $ \cM(\bv,\bw) $ and the affine singular Nakajima quiver variety $ \cM_0(\bv,\bw) $.

Note that $ \cM_0(\bw) $ contains a most singular point $ 0 $ and we let $ L(\bw) = \pi^{-1}(0) \subset \cM(\bw) $.  Note that $ L(\bw) = \sqcup_\bv L(\bv, \bw)$.

Let $ d_\bv = 2\dim L(\bv, \bw) = \dim \cM(\bv, \bw)$.  A formula for $ d_\bv $ is given in \cite{Nak99}.  We will only need a special case of this formula later.  We let $ H_{[k]}(L(\bw)) = \bigoplus_\bv H_{d_\bv - k}(L(\bv, \bw)) $.

From section 15 of \cite{Nak99}, the decomposition theorem applied to the map $ \pi : \cM(\bw) \rightarrow \cM_0(\bw) $ produces
\begin{equation} \label{eq:NakDecomp}
H_{[k]}(L(\bw)) = \oplus_{\bv \in \mathcal V(\bw)} H_{\top}(\cM(\bw)_{x_\bv}) \otimes H^k(IC_0(\cM_0(\bv,\bw))).
\end{equation}
where $ x_\bv $ is a generic point of $ \cM_0(\bv, \bw)^{reg} $, $\cM(\bw)_{x_\bv}$ denotes the fibre of $ \cM(\bw) $ at the point $ x_\bv $ and $ IC_0(\cM_0(\bv,\bw)) $ denotes the $ !$-stalk of the IC sheaf of $ \cM_0(\bv,\bw) $ at the point $ 0 $.

Moreover, in \cite{Nak99}, the following results are proven.

\begin{theorem}
$H_{[k]}(L(\bw)) = 0 $ if $ k $ is odd and thus $ H^k(IC_0(\cM_0(\bv, \bw))) $ also vanishes if $ k $ is odd.

There exists an action of $ G $ on $ H_*(L(\bw)) $ with the following properties.
\begin{enumerate}
\item $ H_*(L(\bw)) $ is isomorphic to the representation $ V(\vlambda)$.
\item  With the $[*]$-grading, the action of $ G $ preserves degrees, so each $ H_{[k]}(L(\bw)) $ is a subrepresentation.
\item The decomposition (\ref{eq:NakDecomp}) is compatible with the $ G $-action, where, as $G$-reprsentations $ H_{\top}(\cM(\bw)_{x_\bv}) \cong V(\mu(\bv)) $ with $ G $ acting trivially on the stalk of the IC sheaf.
\end{enumerate}
\end{theorem}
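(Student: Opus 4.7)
My plan is to reduce the theorem to Nakajima's foundational geometric results and address the four claims in turn.

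First, for the parity vanishing $H_{[k]}(L(\bw)) = 0$ when $k$ is odd, I would choose a generic one-parameter subgroup $\C^\times \to G_\bw$ of the framing torus so that the induced Bia\l ynicki--Birula decomposition of $\cM(\bw)$ restricts on $L(\bw)$ to a finite stratification by even-dimensional affine cells. This uses that $\cM(\bw)$ is holomorphic symplectic with the $\C^\times$ rescaling the symplectic form, so attracting cells inside $L(\bw)$ have exactly half the ambient dimension. The associated CW filtration forces Borel--Moore homology to vanish in odd degrees. The analogous vanishing for $H^k(IC_0(\cM_0(\bv,\bw)))$ then follows formally from (\ref{eq:NakDecomp}), since each direct summand inherits the parity of the sum.

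Second, the $\fg$-action is built from Nakajima's Hecke correspondences: for each $i \in I$ the variety $\mathfrak{P}_i \subset \cM(\bv - \mathbf{e}_i,\bw)\times\cM(\bv,\bw)$ parameterizing pairs of quiver data that differ by a one-dimensional quotient at the $i$-th vertex is Lagrangian, and convolution with its fundamental class gives operators $e_i, f_i$ on $H_*(L(\bw))$. The Cartan element $h_i$ acts by the scalar $\langle \mu(\bv), \alpha_i^\vee\rangle$ on the $\bv$-summand. The Serre relations are verified by a direct geometric intersection calculation with the correspondences, as in \cite{Nak99}. By construction these operators preserve the $[*]$-grading, giving claim (ii).

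Third, to identify $H_*(L(\bw))$ with the tensor product $V(\vlambda)$ as $\fg$-modules: the graded dimensions match the character of $V(\vlambda)$ on general grounds, and one must then produce a compatible system of highest weight vectors inside $H_*(L(\bw))$ whose lowering by the $f_i$ spans everything. The cleanest route in the simply-laced minuscule case is to exploit the identification of $\cM_0(\bw)$ with a transversal slice in the Beilinson--Drinfeld convolution Grassmannian (or the corresponding tensor product quiver variety), so that the tensor-product structure on $H_*(L(\bw))$ is inherited from the tensor structure on the geometric Satake side. This is the main obstacle: Nakajima's direct theorem produces the irreducible $V(|\vlambda|)$, and upgrading to the tensor product $V(\vlambda)$ requires this finer geometric input.

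Fourth, the compatibility of (\ref{eq:NakDecomp}) with the $\fg$-action is obtained by promoting the correspondences to the derived category. The Hecke correspondences descend to correspondences on $\cM_0(\bw)$, and hence act by endomorphisms of the semisimple complex $\pi_* \underline{\C}_{\cM(\bw)}[\dim]$; the semisimplicity splitting given by BBD is therefore automatically $\fg$-equivariant. Restricting the action to the open stratum $\cM_0(\bv,\bw)^{reg}$ and using the local system trivialisation on the generic fibre yields $H_\top(\cM(\bw)_{x_\bv}) \cong V(\mu(\bv))$, while the IC-stalk factor $H^k(IC_0(\cM_0(\bv,\bw)))$ is concentrated at the closed point $0$ which is not moved by the correspondences, forcing $\fg$ to act trivially there.
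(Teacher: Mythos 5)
The paper offers no proof of this theorem: it is quoted verbatim from Nakajima \cite{Nak99}, where the odd vanishing comes from an $\alpha$-partition of $L(\bw)$ into complex affine-space bundles, the $\fg$-action comes from Hecke correspondences exactly as you describe, and the identification of $H_*(L(\bw))$ with $V(\vlambda)$ together with the compatibility of (\ref{eq:NakDecomp}) comes from the theory of standard modules for the quantum loop algebra (the standard module restricts to $U_q(\fg)$ as the tensor product of the fundamental representations indexed by $\bw$). So you are reconstructing a cited result rather than paralleling an argument in the paper; most of your reconstruction matches Nakajima's, with the caveat below.

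The genuine gap is in your third step, and you flag it yourself as ``the main obstacle'' without closing it. The route you propose --- identifying $\cM_0(\bw)$ with a transverse slice in a (Beilinson--Drinfeld or convolution) affine Grassmannian so as to import the tensor structure from geometric Satake --- is the Mirkovi\'c--Vybornov isomorphism, which is available only for $G = SL_n$; the theorem is asserted for all simply-laced $G$. For general $G$ the tensor-product decomposition of $H_*(L(\bw))$ does not come from Satake-type geometry but from the loop/current algebra structure: either Nakajima's standard modules, or (as the paper itself uses two subsections later) the Kodera--Naoi identification of $H_*(L(\bw))$ with the Weyl module $W(|\vlambda|)$ combined with the Fourier--Littelmann fusion product $V(\lambda_1)*\cdots*V(\lambda_m)$, whose underlying $\fg$-module is $V(\vlambda)$. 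Without one of these inputs your argument only yields that the graded character of $H_*(L(\bw))$ is \emph{some} $\fg$-module with highest weight $|\vlambda|$, not the tensor product. Two smaller points: for the parity statement what you need is that the cells are complex (hence even real-dimensional), not that they are half-dimensional --- half-dimensionality is the Lagrangian property and is irrelevant to parity; and in your fourth step the correct mechanism is that the correspondences are proper over $\cM_0(\bw)$ and hence induce degree-zero endomorphisms of the semisimple complex $\pi_*\underline{\C}$, which necessarily preserve its isotypic decomposition and act through the multiplicity spaces $H_\top(\cM(\bw)_{x_\bv})$ --- the phrase ``the closed point $0$ is not moved by the correspondences'' does not by itself force the action on the stalk factor to be trivial.
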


It follows from this theorem that for $ \bv \in \mathcal V(\vlambda) $, there is an isomorphism $ H^*(IC_0(\cM_0(\bv,\bw))) \cong \Hom(V(\mu(\bv)), V(\vlambda)) $.

For any $ \bv \in \mathcal V(\bw) $, let $ f_\vlambda^{\mu(\bv)}(q)$ be the Kazhdan-Lusztig type polynomial of $ \cM_0(\bv, \bw) $ at $ 0$.  More precisely, we define
$$
f_\vlambda^{\mu(\bv)}(q) := \sum_{k \ge 0} \dim H^{2k}(IC_0(\cM_0(\bv,\bw))) q^k
$$
We call $ f_\vlambda^\mu(q) $ the $ q$-analog of tensor product multiplicity.  By the above observation $ f_\vlambda^\mu(1) = \dim \Hom(V(\mu), V(\vlambda))$.

The remainder of this section is devoted to proving the following result.

\begin{theorem} \label{th:fullhomology}
There exists an isomorphism of $ G $-representations between $ H_* (L(\bw)) $ and $ V(\vlambda) $, such that $ R^\ell $ acts by $\zeta^k $ on $ H_{[2k]} (L(\bw)) $.
\end{theorem}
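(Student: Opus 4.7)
The plan is to use Nakajima's decomposition theorem to reduce Theorem \ref{th:fullhomology} to its invariant case (Theorem \ref{th:intro2}) via an enlargement of the minuscule sequence. First, from (\ref{eq:NakDecomp}) we have a $G$-equivariant decomposition
\[
H_*(L(\bw))\cong\bigoplus_\bv V(\mu(\bv))\otimes M_\bv,\qquad M_\bv:=\bigoplus_k H^k(IC_0(\cM_0(\bv,\bw))),
\]
identifying each graded $M_\bv$ with the multiplicity space $\Hom_G(V(\mu(\bv)),V(\vlambda))$. Because $R^\ell$ is $G$-equivariant (using $\vlambda^{(\ell)}=\vlambda$), Schur's lemma forces it to act trivially on each $V(\mu(\bv))$-factor and via an induced operator on $M_\bv$; the theorem is equivalent to the statement that this operator acts by $\zeta^k$ on $H^{2k}(IC_0(\cM_0(\bv,\bw)))$. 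The case $\mu(\bv)=0$ is Theorem \ref{th:intro2}, which I would establish by combining Theorem \ref{th:rotcomp} with the identification $V(\vlambda)^G\cong\bigoplus_k H^{2k}(IC_0(\cM_0(\bv_0,\bw)))$ and diagonalizing the geometric rotation against the cohomological grading.

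For a general $\mu=\mu(\bv)$, I would reduce to the invariant case by enlarging the sequence. Choose a minuscule sequence $\vec\kappa=(\kappa_1,\dots,\kappa_p)$ for $G$ such that $V(\mu)^*$ is a $G$-direct summand of $V(\vec\kappa)$ (possible whenever $G$ admits a nontrivial minuscule representation; $G=E_8$ is vacuous) and form the interleaved sequence
\[
\vlambda'=(\lambda_1,\dots,\lambda_\ell,\vec\kappa,\lambda_{\ell+1},\dots,\lambda_{2\ell},\vec\kappa,\dots,\lambda_m,\vec\kappa)
\]
of length $m'=r(\ell+p)$. Because $\vlambda^{(\ell)}=\vlambda$, one verifies $\vlambda'^{(\ell+p)}=\vlambda'$ with $m'/(\ell+p)=r$, so the same primitive $r$-th root of unity $\zeta$ governs both sequences. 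Under the $G$-equivariant reshuffling $V(\vlambda')\cong V(\vlambda)\otimes V(\vec\kappa)^{\otimes r}$, the rotation $R^{\ell+p}$ corresponds to $R^\ell\otimes\sigma$, where $\sigma$ cyclically permutes the $r$ copies of $V(\vec\kappa)$. Decomposing $V(\vec\kappa)^{\otimes r}$ into $\sigma$-eigenspaces shows that on each summand of $V(\vlambda')^G$ arising from a $\sigma$-$\zeta^j$-eigenspace, $R^{\ell+p}$ acts as $\zeta^j R^\ell$, so the invariant case for $\vlambda'$ (eigenvalue $\zeta^k$ on degree $2k$) translates to the eigenvalue $\zeta^{k-j}$ of $R^\ell$ on the corresponding subspace. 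Selecting the embedding of $V(\mu)^*$ into a specific $\sigma$-eigenspace of $V(\vec\kappa)^{\otimes r}$ (via the $r$ diagonal embeddings and their Fourier decomposition) then extracts $\Hom_G(V(\mu),V(\vlambda))$ as a graded summand with the claimed eigenvalue structure.

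The main obstacle is matching the IC gradings: one must show that the grading on $\Hom_G(V(\mu),V(\vlambda))$ inherited as a summand of $V(\vlambda')^G$ via (\ref{eq:NakDecomp}) for $\bw'=\bw+r\bw(\vec\kappa)$ agrees with its intrinsic grading from $M_\bv$ (possibly shifted by the $\sigma$-eigenvalue index $j$). This is a functoriality statement for Nakajima's decomposition theorem under the enlargement $\bw\leadsto\bw'$, i.e., a compatibility of IC stalks of $\cM_0(\bv,\bw)$ as additional framing dimensions are added. It should follow from hyperbolic restriction for the natural stratification of $\cM_0(\bw')$ or equivalently from Nakajima's tensor-product construction for quiver varieties; cleanly verifying this compatibility is the technically most delicate step of the proof.
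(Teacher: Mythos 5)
Your proposal has a genuine gap at its foundation: you propose to deduce Theorem \ref{th:fullhomology} from its invariant case (Theorem \ref{th:intro2}), but your proof of that base case --- ``combining Theorem \ref{th:rotcomp} with the identification $V(\vlambda)^G\cong\bigoplus_k H^{2k}(IC_0(\cM_0(\bv,\bw)))$ and diagonalizing the geometric rotation against the cohomological grading'' --- is not an argument. Theorem \ref{th:rotcomp} lives entirely in the affine Grassmannian: it says rotation permutes the Satake basis up to sign, and it carries no information whatsoever about the cohomological grading on the IC stalk of the quiver variety. The identification of $V(\vlambda)^G$ with $\bigoplus_k H^{2k}(IC_0(\cM_0(\bv,\bw)))$ from Nakajima's decomposition theorem is a priori only an isomorphism of vector spaces; the assertion that the rotation operator is diagonalized by this grading with eigenvalue $\zeta^k$ in degree $2k$ is precisely the content of Theorem \ref{th:intro2}, so you are assuming what you need to prove. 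The missing idea is the current algebra $\fg[t]$: the paper realizes $H_*(L(\bw))$ as the Weyl module $W(|\vlambda|)$ with its grading (Kodera--Naoi), realizes the Weyl module as the Fourier--Littelmann fusion product of evaluation modules $V(\lambda_i)_{c_i}$, chooses the evaluation points $c_i=\eta^i$ at roots of unity, and then computes directly that $R^\ell$ preserves the fusion filtration and acts by $\zeta^k$ on the $k$-th graded piece because $R^\ell(Xt^k\cdot v)=\eta^{k\ell}Xt^k\cdot R^\ell(v)$. None of this appears in your proposal, and without some such mechanism linking rotation to a grading there is nothing to diagonalize against. (Note also that the paper's logic runs in the opposite direction from yours: Theorem \ref{th:fullhomology} is proved first, for the full homology at once, and Corollary \ref{th:rotationqanalog} --- hence Theorem \ref{th:intro2} --- is then extracted via the decomposition theorem.)

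The reduction you propose from general $\mu(\bv)$ to the invariant case is also not needed in the paper's approach (the current algebra argument handles all $\bv$ simultaneously), and as you yourself acknowledge it rests on an unproved compatibility of the IC gradings of $\cM_0(\bv,\bw)$ and $\cM_0(\bv',\bw')$ under enlargement of the framing, together with a shift by the $\sigma$-eigenvalue index $j$. Even granting the invariant case, that compatibility is a substantive geometric statement about the decomposition theorem for two different quiver varieties, and the eigenvalue bookkeeping ($\zeta^{k-j}$ versus the intrinsic degree) cannot be completed without it. So the proposal is incomplete at both ends: the base case has no proof, and the inductive enlargement step has an unresolved technical core.
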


As a corollary of this theorem and the above use of the decomposition theorem, we obtain the following result.
\begin{corollary} \label{th:rotationqanalog}
Let $ \bv \in \mathcal V(\bw) $ and let $ \mu = \mu(\bv) $.  There exists an isomorphism of vector spaces
$$ H^*(IC_0(\cM_0(\bv,\bw))) \rightarrow \Hom_G(V(\mu), V(\vlambda)), $$
such that $ R^\ell $ acts by $\zeta^k $ on $ H^{2k}(IC_0(\cM_0(\bv,\bw))) $.

In particular, we see that $ f_\vlambda^\mu(\zeta^d) $ is the trace of $ R^{\ell d} $ on $\Hom_G(V(\mu), V(\vlambda))$.
\end{corollary}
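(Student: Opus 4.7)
The plan is to deduce the corollary directly from Theorem~\ref{th:fullhomology} by passing to $V(\mu)$-isotypic components. Since $R$ is permutation of tensor factors it is a $G$-equivariant isomorphism $V(\vlambda) \to V(\vlambda^{(1)})$, so $R^\ell$ is a $G$-equivariant endomorphism of $V(\vlambda)$. Consequently it preserves the $V(\mu)$-isotypic component of $V(\vlambda)$, which decomposes canonically as $V(\mu) \otimes \Hom_G(V(\mu), V(\vlambda))$, and on this component $R^\ell$ has the form $\mathrm{id}_{V(\mu)} \otimes R^\ell_\mu$ for some linear endomorphism $R^\ell_\mu$ of $\Hom_G(V(\mu), V(\vlambda))$.

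Now I would transport this picture through the isomorphism of Theorem~\ref{th:fullhomology}. By the decomposition (\ref{eq:NakDecomp}) and the stated $G$-equivariance (with trivial action on the IC-stalk factor), the $V(\mu)$-isotypic component of $H_{[2k]}(L(\bw))$ equals $V(\mu(\bv)) \otimes H^{2k}(IC_0(\cM_0(\bv,\bw)))$ for the unique $\bv \in \mathcal V(\bw)$ with $\mu(\bv) = \mu$ (vanishing for odd degree). Taking $\Hom_G(V(\mu), -)$ of Theorem~\ref{th:fullhomology} therefore yields a graded linear isomorphism
\[
H^*(IC_0(\cM_0(\bv,\bw))) \;\xrightarrow{\;\sim\;}\; \Hom_G(V(\mu), V(\vlambda)),
\]
and the hypothesis that $R^\ell$ acts by $\zeta^k$ on $H_{[2k]}(L(\bw))$ immediately implies that $R^\ell_\mu$ corresponds to multiplication by $\zeta^k$ on $H^{2k}(IC_0(\cM_0(\bv,\bw)))$.

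The trace statement then follows formally: since $R^{\ell d}_\mu$ acts on $H^{2k}(IC_0(\cM_0(\bv,\bw)))$ as multiplication by $\zeta^{kd}$,
\[
\mathrm{tr}\bigl(R^{\ell d}\,;\,\Hom_G(V(\mu),V(\vlambda))\bigr) \;=\; \sum_{k \ge 0} \zeta^{kd} \dim H^{2k}(IC_0(\cM_0(\bv,\bw))) \;=\; f_\vlambda^\mu(\zeta^d).
\]
There is essentially no obstacle here beyond carefully matching the gradings and isotypic decompositions; the whole content of the corollary is already packaged in Theorem~\ref{th:fullhomology} and the $G$-equivariance properties of the decomposition theorem recalled from \cite{Nak99}. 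The real work lies upstream in establishing Theorem~\ref{th:fullhomology}, not in this derivation.
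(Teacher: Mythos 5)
Your proposal is correct and is exactly the derivation the paper intends: the corollary is obtained by combining Theorem~\ref{th:fullhomology} with the $G$-equivariant decomposition (\ref{eq:NakDecomp}), passing to the $V(\mu)$-isotypic component via Schur's lemma, and reading off the trace. The paper states this in one sentence without writing out the details; your write-up just makes the same argument explicit.
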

Theorem \ref{th:intro2} is a special case of this result, where we define $ \cM_0(\vlambda) = \cM_0(\bv,\bw) $ where $ \bv $ is defined by $ \sum v_i \alpha_i = |\vlambda|$.

\subsection{Current algebra actions}
The vector space $ H_*(L(\bw)) $ carries an action of the current Lie algebra $ \fg[t] : = \fg \otimes_\C \C[t] $ which extends the action of $ \fg $.  This follows from specializing at the construction of Varagnolo of an action of the Yangian on the equivariant homology of quiver varieties.

More precisely, we have the following result due to Kodera-Naoi \cite{KN}.
\begin{theorem} \label{th:WeylQuiver}
There is an action of $ \fg[t] $ on $H_*(L(\bw))$.  The homological $[*]$-grading is compatible with the grading on $ \fg[t] $.  Moreover, we have an isomorphism of $ \fg[t] $-modules between $ H_*(L(\bw)) $ and the Weyl module $ W(|\vlambda|) $.
\end{theorem}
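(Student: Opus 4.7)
The plan is to prove this in three phases, adapting what I take to be the Kodera-Naoi strategy.

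First, I would construct the $\fg[t]$-action by specialization. Varagnolo produced an action of the Yangian $Y(\fg)$ on the equivariant (Borel-Moore) homology of Nakajima quiver varieties. The Lie algebra $\fg[t]$ embeds naturally into $Y(\fg)$ (at the associated graded / classical level), so restricting and specializing the equivariant parameter gives an action of $\fg[t]$ on $H_*(L(\bw))$ that extends Nakajima's $\fg$-action. The grading claim would follow by tracing the $\mathbb{C}^\times$-loop-weight of Varagnolo's generators: an element of $\fg \otimes t^k$ comes from Yangian operators of loop-weight $k$, and under specialization these shift the homological degree by a controlled amount, which after matching conventions identifies with the standard grading $\deg(x \otimes t^k) = k$ on $\fg[t]$.

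Second, I would identify the module structure via the universal property of the local Weyl module. The local Weyl module $W(|\vlambda|)$ is the maximal finite-dimensional cyclic $\fg[t]$-module generated by a highest-weight vector $v_0$ of weight $|\vlambda|$, subject to $(\fn^+ \otimes \C[t]) v_0 = 0$, $(\fg \otimes t \C[t]) \cdot v_0$ constrained appropriately, and the integrability relations $(f_i \otimes 1)^{\langle |\vlambda|, \alpha_i^\vee\rangle + 1} v_0 = 0$. In $H_*(L(\bw))$ there is a natural candidate for the cyclic vector: the fundamental class of the single component $L(\bv_0,\bw)$ with $\bv_0 = 0$, which has weight $|\vlambda|$, lies in top homological degree zero, and is annihilated by the raising operators as well as by the strictly positive-degree part of $\fg[t]$ (the latter because these operators lower the relevant $\bv$-grading and there is nothing below $\bv_0 = 0$). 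Checking the Chevalley-type integrability relations yields a surjection $W(|\vlambda|) \twoheadrightarrow H_*(L(\bw))$.

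Third, I would close the argument by a dimension count. By Nakajima, $H_*(L(\bw)) \cong V(\vlambda) = V(\lambda_1) \otimes \cdots \otimes V(\lambda_m)$ as a $\fg$-module. On the other side, by the dimension formula for local Weyl modules in the simply-laced case (Chari-Pressley, Fourier-Littelmann, Naoi), $W(|\vlambda|)$ factors, up to isomorphism, as a tensor product of fundamental local Weyl modules $W(\omega_{i_j})$; but for minuscule $\omega_i$ in the simply-laced case, $W(\omega_i) = V(\omega_i)$. Hence $\dim W(|\vlambda|) = \prod_j \dim V(\lambda_j) = \dim H_*(L(\bw))$, and the surjection is an isomorphism.

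The main obstacle is the second phase: verifying the defining relations of the Weyl module for the Varagnolo-specialized operators on the geometric side. The relations involving the $\fn^+ \otimes t\C[t]$ action on the chosen cyclic vector, and the nilpotency of lowering operators, require genuinely geometric input about how Yangian generators act on fundamental classes of Lagrangian components. Establishing grading compatibility of the resulting isomorphism with the $[\ast]$-grading (so that Theorem~\ref{th:fullhomology} can later be applied) is part of the same technical package and is where I would expect the most delicate work to lie.
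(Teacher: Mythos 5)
The paper does not prove this statement: it is quoted as a theorem of Kodera--Naoi \cite{KN}, with only the one-line remark that the $\fg[t]$-action arises by specializing Varagnolo's Yangian action on equivariant homology of quiver varieties. So there is no in-paper proof to measure your proposal against; what you have written is a reconstruction of the argument in \cite{KN}. Your three-phase architecture --- specialize the Yangian action to get $\fg[t]$ acting compatibly with the grading, exhibit the class of the point $L(0,\bw)$ as a highest-weight vector satisfying the Weyl-module relations, and close with a dimension count matching Nakajima's $H_*(L(\bw))\cong V(\vlambda)$ against the Chari--Pressley/Fourier--Littelmann formula $\dim W(|\vlambda|)=\prod_j\dim W(\omega_{i_j})=\prod_j\dim V(\lambda_j)$ (valid here because every $\lambda_j$ is minuscule, so $W(\omega_{i_j})=V(\omega_{i_j})$) --- is indeed the standard route and is essentially how Kodera--Naoi proceed.

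Two points in your second phase need repair. First, it is false that the strictly positive-degree part of $\fg[t]$ annihilates $[L(0,\bw)]$ ``because these operators lower the $\bv$-grading'': the elements $x\otimes t^k$ with $x\in\mathfrak{n}^-$ move in the direction of increasing $\bv$ and act nontrivially on the highest-weight vector. What must be checked is precisely the defining relations of the local Weyl module, namely that $\mathfrak{n}^+\otimes\C[t]$ and $\mathfrak{h}\otimes t\C[t]$ kill the vector, together with the integrability relation for $f_i\otimes 1$. Second, and more seriously, verifying those relations only produces a map $W(|\vlambda|)\to H_*(L(\bw))$; since the dimension count shows the two sides have \emph{equal} dimension, you still need either surjectivity --- i.e.\ that $[L(0,\bw)]$ generates $H_*(L(\bw))$ over $\fg[t]$, a genuine geometric cyclicity statement which in \cite{KN} comes from the standard-module structure of $H_*(L(\bw))$ --- or injectivity; neither follows from equality of dimensions alone. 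You correctly flag the relation-checking as the delicate step, but the cyclicity input should be named explicitly, since without it the skeleton does not close.
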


We will also need the following construction of Fourier-Littelmann \cite{FL} which realizes the Weyl module as a fusion product of evaluation representations.  Let $ c_1, \dots, c_m $ be distinct complex numbers.  Then we have evaluation representations $ V(\lambda_k)_{c_k} $ of $ \fg[t] $.  We may form their tensor product
$$ W := V(\lambda_1)_{c_1} \otimes \cdots \otimes V(\lambda_m)_{c_m} $$
We choose a cyclic vector $w $ for $ W $ by taking a tensor product of $ G $-highest weight vectors in each $ V(\lambda_i) $.  Then we define a filtration $W_0 \subset W_1 \subset \cdots $ on $ W $ by defining $ W_k = (U\fg[t])_k w $, where $ (U\fg[t])_k $ denotes the filtration on $ U\fg[t] $ coming from the grading on $ \fg[t]$.
Finally, we take the associated graded by this filtration.  This results in a graded $ \fg[t] $-module which is called the fusion product and denoted $ V(\lambda_1) * \cdots * V(\lambda_m):= gr(W) $.

The following result is due to Fourier-Littelmann.
\begin{theorem} \label{th:WeylFusion}
For any choice of distinct complex numbers $ c_1, \dots, c_m $, $$ V(\lambda_1) * \cdots * V(\lambda_m) \cong W(|\vlambda|) $$ as graded $ \fg[t] $-modules.
\end{theorem}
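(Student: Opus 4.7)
The plan is to combine the universal property of the local Weyl module with a dimension count. First I would establish cyclicity of the ungraded tensor product $W = V(\lambda_1)_{c_1} \otimes \cdots \otimes V(\lambda_m)_{c_m}$ as a $\fg[t]$-module generated by $w = v_1 \otimes \cdots \otimes v_m$, where each $v_i \in V(\lambda_i)$ is a $\fg$-highest weight vector. Since the $c_i$ are distinct, Lagrange interpolation produces polynomials $p_i(t) \in \C[t]$ with $p_i(c_j) = \delta_{ij}$, and for any $X \in \fg$ the element $X \otimes p_i(t) \in \fg[t]$ acts on $W$ as $X$ on the $i$-th factor and as zero on the others. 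Iterating, every vector of the form $X_1 v_1 \otimes \cdots \otimes X_m v_m$ with $X_j \in U\fg$ lies in $U\fg[t] \cdot w$, so $W = U\fg[t] \cdot w$.

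Since $w$ is a $\fg$-highest weight vector of weight $|\vlambda|$ and a cyclic generator, $W$ is a highest-weight $\fg[t]$-module of weight $|\vlambda|$, so the universal property of the local Weyl module yields a surjection $\phi : W(|\vlambda|) \twoheadrightarrow W$. This surjection is compatible with the PBW filtration on $W(|\vlambda|)$ coming from the grading on $\fg[t]$ and the cyclic filtration $W_k = (U\fg[t])_k \cdot w$ used to construct the fusion product. Passing to associated gradeds, $\phi$ induces a surjection of graded $\fg[t]$-modules $\bar{\phi} : W(|\vlambda|) \twoheadrightarrow V(\lambda_1) * \cdots * V(\lambda_m)$, and evidently $\dim V(\lambda_1) * \cdots * V(\lambda_m) = \dim W = \prod_j \dim V(\lambda_j)$.

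To upgrade $\bar{\phi}$ to an isomorphism I would invoke the Chari-Pressley-type upper bound $\dim W(|\vlambda|) \le \prod_{i \in I} (\dim V(\omega_i))^{n_i}$, valid in the simply-laced case, where $|\vlambda| = \sum n_i \omega_i$. Since each minuscule $\lambda_j$ is fundamental, the right-hand side equals $\prod_j \dim V(\lambda_j)$, matching $\dim W$ and forcing $\bar{\phi}$ to be an isomorphism. Independence of the fusion product from the choice of distinct parameters $c_1, \ldots, c_m$ then follows automatically, since $W(|\vlambda|)$ is defined without reference to them.

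The main obstacle is supplying the dimension bound on the Weyl module; this is itself a substantial theorem, requiring one to construct enough elements inside $W(|\vlambda|)$ (typically via Demazure-module arguments, or explicit monomial bases built from the crystals of the minuscule factors). Without this input the construction only gives a graded surjection $W(|\vlambda|) \twoheadrightarrow V(\lambda_1) * \cdots * V(\lambda_m)$, and it is the reverse comparison of dimensions that forces equality.
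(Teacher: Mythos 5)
The paper does not prove this statement; it is quoted verbatim from Fourier--Littelmann \cite{FL}, so there is no internal proof to compare against. Your sketch reproduces the standard argument from that circle of ideas and its architecture is correct: cyclicity of $W$ via Lagrange interpolation, a surjection from the local Weyl module onto the fusion product, and the Chari--Pressley-type dimension bound $\dim W(|\vlambda|) \le \prod_i (\dim W(\omega_i))^{n_i}$ as the one genuinely hard input (which you rightly flag; in the simply-laced case it is established by Fourier--Littelmann by identifying $W(|\vlambda|)$ with a level-one Demazure module, and $W(\omega_i)\cong V(\omega_i)$ precisely because the $\omega_i$ occurring here are minuscule). One technical wrinkle: the universal property of the local Weyl module $W(|\vlambda|)$ at $0$ does not give a surjection onto the \emph{ungraded} module $W$ as you assert, because in $W$ the generator satisfies $(h\otimes t^k)w = \bigl(\sum_j c_j^k \lambda_j(h)\bigr)w$ rather than $(h\otimes t^k)w=0$ for $k>0$; the relations $(h\otimes t^k)\bar w = 0$ hold only for the image $\bar w$ in the associated graded, since $(h\otimes t^k)w$ is a scalar multiple of $w$ and hence lies in $W_0\subseteq W_{k-1}$. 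So you should apply the universal property directly to $gr(W)$ (or else pass through the local Weyl module supported at the points $c_1,\dots,c_m$ together with the fact that all local Weyl modules of a fixed highest weight have the same dimension). With that adjustment the argument is the intended one, and your closing remark correctly locates where the real work lies.
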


We are now in a position to prove Theorem \ref{th:fullhomology} and thus our second main theorem.
\begin{proof}[Proof of Theorem \ref{th:fullhomology}]
Let $ \eta $ denote a primitive $ m$th root of unity such that  $ \eta^\ell =\zeta  $.

Let $ c_i = \eta^i $ for $ i = 1, \dots m $.  Then we consider the tensor product of evaluation $ \fg[t] $-modules
$$
W =  V(\lambda_1)_{c_1} \otimes \cdots \otimes V(\lambda_m)_{c_m}
$$
as above.

We have a linear $ G $-equivariant map $ R^\ell : W \rightarrow W $ given by rotation of tensor factors $ \ell $ times, so
$$
R^\ell (v_1 \otimes \cdots \otimes v_m) = v_{\ell + 1} \otimes v_{\ell + 2} \otimes \cdots \otimes v_{\ell}.
$$
Note that if $ w $ denotes the cyclic vector in $ W $, then $ R^\ell w  = w $.

The map $ R^\ell $ is not $ \fg[t]$-equivariant.  More precisely, if we take $ X t^k \in \fg[t] $, then
\begin{align*}
R^\ell( Xt^k (v_1 \otimes \cdots \otimes v_m)) &= R^\ell(\sum_j \eta^{kj} v_1 \otimes \cdots \otimes X v_j \otimes \cdots \otimes v_m) \\
&= \sum_j \eta^{kj} v_{\ell + 1} \otimes \cdots \otimes X v_j \otimes \cdots \otimes v_\ell \\
&= \eta^{k\ell} Xt^k R^\ell(v_1 \otimes \cdots \otimes v_m)
\end{align*}
This shows that $ R^\ell $ preserves the filtration $ W_k $ and acts by $ \eta^{k\ell} = \zeta^k $ on $ W_k/W_{k-1} $.

Thus on $ gr(W) $, $ R^\ell $ acts by $ \zeta^k $ on the $k$th graded piece and thus $ R^\ell $ acts by $ \zeta^k $ on the $ k$th graded piece of the Weyl module by Theorem \ref{th:WeylFusion} and thus it acts by $\zeta^k $ on $\oplus_{\bv} H_{[2k]} L(\bv, \bw) $ by Theorem \ref{th:WeylQuiver}.
\end{proof}

Another way to think about this proof is to say that we start with $ V(\vlambda) $.  By considering $ R^\ell$, $ V(\vlambda) $ becomes a $ G \times C_r $ representation.

On the other hand, we can consider the $ G([t]) \ltimes \C^\times $ representation $ V(\lambda_1) * \cdots * V(\lambda_m) $, where $ \C^\times $ acts on $ G([t]) $ by loop rotation and on $ V(\lambda_1) * \cdots * V(\lambda_m) $ by means of the grading.  Then above we have proven that
$$
 V(\lambda_1)* \cdots *V(\lambda_m) \cong V(\vlambda)
$$
as $ G \times C_r $ representations, where we use $ G \times C_r \hookrightarrow G([t]) \ltimes \C^\times $ in order to view the LHS as a $ G \times C_r $ representation.

The above proof works if $ G $ is not simply-laced and also if the $ \lambda_k $ are not minuscule.  Thus, we get an analog of Theorem~\ref{th:fullhomology} and Corollary~\ref{th:rotationqanalog}, where the $ q$-analog of tensor product multiplicity is defined using the grading on the fusion product $ V(\lambda_1) * \cdots * V(\lambda_m)$.

\subsection{Completion of proof of Theorem \ref{th:intro3}}
Let $ \vlambda $ denote a sequence of dominant minuscule weights.  By Theorem \ref{th:intro1}, the linearization of the map $ R^\ell : P(\vlambda) \rightarrow P(\vlambda) $ is the linear operator $(-1)^{\langle \lambda_1 + \cdots + \lambda_\ell,2\rho^\vee \rangle}R^\ell \text{ on } V(\vlambda)^G $.
By Theorem \ref{th:intro2} (in particular Corollary \ref{th:rotationqanalog}), the trace of $ R^{d\ell} $ is given by $ f_{\vlambda}(\zeta^d) $.

Thus, we see that the trace of $ (-1)^{d\langle \lambda_1 + \cdots + \lambda_\ell ,2\rho^\vee \rangle} R^{d\ell} $ is given by
$$ (\zeta^{r/2})^{d \langle \lambda_1 + \cdots + \lambda_\ell,2\rho^\vee \rangle} f_{\vlambda}(\zeta^d) = (\zeta^d)^{\langle |\vlambda|, \rho^\vee \rangle}f_{\vlambda}(\zeta^d)$$
where we use that $ r(\lambda_1 + \cdots + \lambda_\ell) = |\vlambda|$.  Thus, we can take $ q^{\langle |\vlambda|, \rho^\vee \rangle }f_{\vlambda}(q) $ as our cyclic sieving polynomial.

\section{Special case of $ SL_n$}
In this section, we take $ G = SL_n $.  We will use skew Howe duality to see that the above $ q$-analog of tensor product multiplicity for $ SL_n $ is actually $q$-analog of weight multiplicity for $ SL_m $.

\subsection{Application of (geometric) skew Howe duality}
Let $ \vlambda $ be a minuscule sequence as before.  For each $ j $, we can write $ \lambda_j = \omega_{i_j} $ for some $ i_j \in \{1, \dots, n-1 \} $.  Let $ N = \sum i_j $.

Now, $ V(\vlambda) $ is actually a weight space in a representation of $ SL_m $.  By elementary linear algebra, we have an identification
$$ V(\vlambda) = \Lambda^N(\C^n \otimes \C^m)_{(i_1, \dots, i_m)}. $$  Skew Howe duality shows that this specializes to an identification
\begin{equation} \label{eq:skewHowe}
 \Hom_{SL_n}(V(\mu), V(\vlambda)) = V(\nu)_\gamma
\end{equation}
where $ \nu = \mu^\vee $ denotes the transpose partition to $ \mu $ and $ \gamma = (i_1, \dots, i_m) $.

In the previous section, we saw that $\Hom_{SL_n}(V(\mu), V(\vlambda))$ equals the $!$-stalk at 0 of the IC sheaf of $ \cM_0(\bv, \bw) $ (where $ \bv, \bw $ are constructed from $ \vlambda $ in the previous section).   We will now see that the right hand side of (\ref{eq:skewHowe}) can also be identified with the $!$-stalk of an IC sheaf.

Recall the Kostka-Foulkes polynomial $ K_{\nu, \gamma}(q) $ which may be defined using the $ q$-analog of the Kostant multiplicity formula.  In particular $ K_{\nu, \gamma}(1) = \dim V(\gamma)_\nu $ and we refer to $ K_{\nu, \gamma}(q) $ as the $ q$-analog of this weight multiplicity.

From Lusztig's work \cite{Lusztig:qanalog}, we know that $ V(\nu)_\gamma $ equals the stalk at $t^\gamma $ of the IC sheaf of $ \Gr(\nu) $ (here we work in the affine Grassmannian of $ PGL_m $).  Lusztig \cite{Lusztig:qanalog} conjectured, and Kato \cite{K} proved, that the Kostka-Foulkes polynomial $ K_{\nu, \gamma}(q) $ is related to this stalk by
\begin{equation} \label{eq:KFLusztig}
K_{\nu, \gamma}(q) = q^{\langle-\gamma, \rho  \rangle} \sum_{k \ge 0} \dim H^k(IC_{t^\gamma}(\Gr(\nu))) q^{k/2}
\end{equation}

Let us consider the transverse slice $ \Gr(\nu)_\gamma $ (see \cite{Mirkovic-Vybornov} for the precise definition).  Since $ \Gr(\nu)_\gamma $ is a transverse slice we have that $ i^! IC(\Gr(\nu)) = IC(Gr(\nu)_\gamma)[-2\langle \gamma, \rho \rangle] $.  Combining this with (\ref{eq:KFLusztig}), we see that the Kostka-Foulkes polynomial is the generating function for the $!$-stalk of the IC sheaf of $ \Gr(\nu)_\gamma $ at $ t^\gamma $,
\begin{equation} \label{eq:KFstalk}
K_{\nu, \gamma}(q) = \sum_{k \ge 0} \dim H^{2k}(IC_{t^\gamma}(\Gr(\nu)_\gamma)) q^k.
\end{equation}

The geometric version of (\ref{eq:skewHowe}) is the following result of Mirkovi{\'c}-Vybornov \cite{Mirkovic-Vybornov}.

\begin{theorem}
With the above notation, there is an isomorphism of varieties $\cM_0(\bv, \bw) \cong \Gr(\nu)_\gamma $ which takes $ 0 $ to $ t^\gamma $.
\end{theorem}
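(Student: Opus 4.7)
The plan is to prove this statement by setting up explicit linear-algebraic parametrizations of both sides and constructing a bijection between them that is manifestly algebraic. This is the approach Mirković–Vybornov take, but let me sketch how I would organize the argument independently.

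First I would describe the transverse slice $\Gr(\nu)_\gamma$ concretely as a moduli of lattices. A point of $\Gr(\nu)$ is a lattice $L \subset \cK^m$ with $t^N\cO^m \subset L \subset t^{-N}\cO^m$ whose relative position with the standard lattice is $\nu$; imposing the slice condition at $t^\gamma$ pins down the position of $L$ relative to $t^\gamma\cO^m$ in a transverse way. After choosing a suitable finite-dimensional "window" $W = t^{-N}\cO^m/t^N\cO^m$, one can encode $L$ as a subspace $L/t^N\cO^m \subset W$ stable under the action of $t$, with specific constraints on each graded piece. Concretely, for each $i$ the piece $L_i := t^i\cO^m \cap L / t^{i+1}\cO^m \cap L$ sits inside a fixed $\C^m$, and the $t$-action gives maps $L_i \to L_{i+1}$; the numerical conditions imposed by $\nu$ and $\gamma$ fix the dimensions.

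Next I would describe $\cM_0(\bv,\bw)$ in the type $A$ setting using Nakajima's explicit quiver data. A point is (an equivalence class of) a collection of maps between the spaces $V_i$ and the framing spaces $W_i$ — namely arrows $B_i : V_i \to V_{i+1}$ and $B_i^*: V_{i+1}\to V_i$, together with $a_i: W_i\to V_i$, $b_i: V_i\to W_i$ — satisfying the ADHM/moment-map equation and taken up to the $\prod GL(V_i)$-action, restricted to closed orbits. The key calculation is the dimension formula and the matching $w_i(\vlambda) = \#\{j: \lambda_j = \omega_i\}$ and $v_i$ recovering the multiplicity of $\alpha_i$ in $|\vlambda|-\mu$.

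The crucial step is to define the isomorphism. Given a lattice $L$, I would build quiver data by setting $V_i := L_i$ (graded pieces as above), let $B_i$ be multiplication by $t$, let $B_i^*$ be obtained from the dual/perpendicular lattice description, and let $a_i, b_i$ come from the canonical maps between the graded pieces of $L$ and those of the standard lattice at $t^\gamma$ (which is how the framing enters via the sequence $(i_1,\dots,i_m)$). Conversely, from quiver data one reconstructs $L$ as the span of $a_i(W_i)$ under iterated $B$'s inside the appropriate window. One checks that the ADHM equation corresponds exactly to the commutativity of $t$-multiplication, that stability and closedness of orbits correspond to the lattice condition, and that $0 \in \cM_0$ (the point where all $B,B^*,a,b$ vanish) corresponds to $L = t^\gamma\cO^m$.

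The main obstacle is verifying the equivalence of moduli descriptions, especially matching the moment-map equation with the $t$-linearity of lattice maps, and showing that the GIT quotient on the quiver side corresponds precisely to the algebraic structure on the slice. Once that is set up, checking that the bijection is algebraic in both directions is essentially formal, and the identification of basepoints follows from construction.
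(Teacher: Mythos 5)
First, note that the paper does not prove this statement at all: it is quoted verbatim as a theorem of Mirkovi\'c--Vybornov \cite{Mirkovic-Vybornov}, so there is no internal proof to compare against. Your outline is in the same general spirit as the actual Mirkovi\'c--Vybornov argument (turn both sides into explicit finite-dimensional linear algebra: on the Grassmannian side, the nilpotent operator ``multiplication by $t$'' on a finite window of the lattice; on the quiver side, the type $A$ ADHM data), so the strategy is not wrong.

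However, as a proof it has a genuine gap, and it is exactly the one you flag yourself in the last paragraph. The two verifications you defer --- that the moment-map equation $B_{i}B_i^* - B_{i-1}^*B_{i-1} + a_i b_i = 0$ corresponds to $t$-linearity of the lattice data, and that the affine quotient $\cM_0(\bv,\bw)$ (closed $\prod GL(V_i)$-orbits, no stability condition) matches the slice $\Gr(\nu)_\gamma$ --- are not routine bookkeeping; they are the entire content of the theorem. In particular: (a) your recipe produces $B_i$ from multiplication by $t$ but gives no actual construction of $B_i^*$ (``obtained from the dual/perpendicular lattice description'' is not a definition, and choosing one requires either a bilinear form or the Kraft--Procesi-style row/column reduction that Mirkovi\'c--Vybornov and Maffei actually carry out); (b) your reconstruction of $L$ as ``the span of $a_i(W_i)$ under iterated $B$'s'' implicitly uses a stability condition, which is appropriate for $\cM(\bv,\bw)$ but not for $\cM_0(\bv,\bw)$ --- on the affine quotient one must instead identify $\cM_0$ with a concrete variety of matrices satisfying rank conditions (itself a nontrivial type $A$ theorem) before comparing with the slice; and (c) the role of $\gamma$ is misstated: the transverse slice at $t^\gamma$ is modeled on a Slodowy slice at a nilpotent of Jordan type determined by $\gamma$, and the framing maps $a_i,b_i$ encode that slice datum, not merely ``canonical maps between graded pieces.'' Without these steps the map is not defined, let alone shown to be an isomorphism of varieties, so the proposal is a correct plan but not a proof.
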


Combining this theorem with (\ref{eq:KFstalk}), we see that the $q$-analog of the tensor product multiplicity $\dim \Hom_{SL_n}(V(\mu), V(\vlambda)) $ equals the $ q$-analog of the weight multiplicity $ \dim V(\nu)_\gamma $.  In other words $$ f_\vlambda^\mu(q) = K_{\nu, \gamma}(q). $$

(Actually, one can see this result without appealing to any geometry.  The $q$-analog of weight multiplicity can be defined using the principal nilpotent in $ \mathfrak{sl}_m$ and one can observe that under the skew Howe duality this principal nilpotent becomes the degree operator related to $ \mathfrak{sl}_n[t] $.)

\subsection{Combinatorial implication}

From our main result Theorem \ref{th:intro3}, the cyclic sieving polynomial for $R^\ell $ on $ P(\vlambda) $ is given by
$$ q^{\langle |\vlambda|, \rho^\vee_{SL_n} \rangle }f_{\vlambda}(q) $$

Combining all this together, setting $ \mu = 0 $, and switching to tableaux with the aid of section \ref{se:tableaux} we obtain the following result which is a more precise version of Theorem 1.5 from \cite{R}.

\begin{theorem} \label{th:RhoadesRefined}
  Let $ \gamma = (i_1, \dots, i_m) $ be a sequence of integers with $ 1 \le i_k \le n $ for all $ k $ and with $ i_1 + \cdots +i_m = nb $ for some $ b $.  Assume that $ (i_1, \dots, i_m) $ is invariant under $ \ell$-th cyclic shift.  Let $ T(\gamma) $ be the set of row-strict semistandard Young tableaux of shape $\nu =  n^b $ and content $ \gamma $.  Let $ p $ denote the promotion operator on tableaux.  Then
$$ (T(\gamma), p^\ell, q^{\frac{1}{2}(n^2b - \sum_{j=1}^m i_j^2)}  K_{\nu, \gamma}(q))$$
satisfies the cyclic sieving phenomenon.
\end{theorem}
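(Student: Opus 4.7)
The plan is to assemble pieces already established in the paper. First, the bijection of section~\ref{se:tableaux} identifies $T(\gamma)$ with $P(\vlambda)$ where $\vlambda = (\omega_{i_1}, \dots, \omega_{i_m})$, and under this identification promotion $p$ corresponds to rotation $R$ on minuscule Littelmann paths. So the cyclic sieving statement for $(T(\gamma), p^\ell, \cdot)$ is equivalent to the cyclic sieving statement for $(P(\vlambda), R^\ell, \cdot)$. The hypothesis that $\gamma$ is invariant under $\ell$-th cyclic shift is precisely $\vlambda^{(\ell)} = \vlambda$, so Theorem~\ref{th:intro3} applies and produces the cyclic sieving polynomial $q^{\langle |\vlambda|, \rho^\vee \rangle} f_\vlambda(q)$.

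Next, I would identify the two pieces of this polynomial with the claimed quantities. For the scalar factor, since $\langle \omega_i, \rho^\vee \rangle = i(n-i)/2$ in type $A_{n-1}$ and $|\vlambda| = \sum_j \omega_{i_j}$, a direct computation gives
\[
\langle |\vlambda|, \rho^\vee \rangle = \sum_{j=1}^m \frac{i_j(n-i_j)}{2} = \frac{1}{2}\Bigl(n\sum_j i_j - \sum_j i_j^2\Bigr) = \frac{1}{2}\Bigl(n^2 b - \sum_j i_j^2\Bigr),
\]
using $\sum_j i_j = nb$. For the polynomial factor, I would invoke the skew Howe duality / Mirkovi\'c--Vybornov identification developed in the previous subsection: taking $\mu = 0$ (so that $\Hom_{SL_n}(V(\mu), V(\vlambda)) = V(\vlambda)^{SL_n}$), the transpose partition is $\nu = n^b$ under the natural $GL_n$ enlargement, and the identification $f_\vlambda^\mu(q) = K_{\nu,\gamma}(q)$ gives $f_\vlambda(q) = K_{n^b, \gamma}(q)$.

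Combining these observations, the cyclic sieving polynomial from Theorem~\ref{th:intro3} becomes exactly $q^{\frac{1}{2}(n^2 b - \sum_j i_j^2)} K_{\nu,\gamma}(q)$, which is the polynomial in the statement. None of the steps is a true obstacle: the main technical content has already been absorbed into Theorems~\ref{th:intro1}, \ref{th:intro2}, \ref{th:intro3} and the Mirkovi\'c--Vybornov identification. The only small point to be careful about is the identification $\mu = 0 \leftrightarrow \nu = n^b$ (rather than $\nu = 0$), which arises because we pass between $SL_n$-invariants and a $GL_n$-weight space under skew Howe duality; this is what guarantees the shape $\nu = n^b$ appearing in Rhoades' original statement and explains why the hypothesis $\sum_j i_j = nb$ (equivalently, $|\vlambda|$ lies in the root lattice of $SL_n$) is needed for $T(\gamma)$ to be nonempty.
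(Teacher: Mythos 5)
Your proposal is correct and follows essentially the same route as the paper: identify $T(\gamma)$ with $P(\vlambda)$ via Section~\ref{se:tableaux}, apply Theorem~\ref{th:intro3}, and substitute $f_\vlambda(q)=K_{n^b,\gamma}(q)$ from the skew Howe duality / Mirkovi\'c--Vybornov discussion with $\mu=0$. Your explicit verification that $\langle|\vlambda|,\rho^\vee\rangle=\sum_j i_j(n-i_j)/2=\tfrac{1}{2}(n^2b-\sum_j i_j^2)$ is exactly the computation the paper leaves implicit, and it is correct.
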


\end{document}